\newtheorem{theorem}{Theorem}[section]
\newtheorem{proposition}[theorem]{Proposition}
\newtheorem{lemma}[theorem]{Lemma}
\theoremstyle{remark}
\newtheorem*{remark}{Remark}
\newcommand{\R}{\mathbb{R}}
\newcommand{\N}{\mathbb{N}}
\newcommand{\Q}{\mathbb{Q}}
\renewcommand{\L}{\mathcal{L}}
\newcommand{\eps}{\varepsilon}
\newcommand{\chevron}[1]{\langle #1 \rangle}
\newcommand{\norm}[1]{\left\lVert#1\right\rVert}
\newcommand{\paren}[1]{\left( #1 \right)}
\newcommand{\bracket}[1]{\left[ #1 \right]}
\newcommand{\abs}[1]{\left\lvert #1 \right\rvert}
\newcommand{\del}{\partial}
\newcommand{\grad}{\nabla}
\newcommand{\ddt}{\frac{d}{dt}}
\newcommand{\Laplace}{\Delta}
\newcommand{\kinet}{\bracket{\del_t + v\cdot \grad_x}}
\newcommand{\bessel}{\paren{1-\Laplace_v}}
\newcommand{\loc}{\text{loc}}
\newcommand{\rest}{{\upharpoonright}}
\newcommand{\weakly}{\rightharpoonup}
\newcommand{\n}{^{-1}}
\newcommand{\indic}[1]{\chi_{\{#1\}}}
\newcommand{\Qext}{{Q_\textrm{ext}}}
\newcommand{\Qint}{{Q_\textrm{int}}}
\newcommand{\Qearly}{{Q_\textrm{early}}}
\newcommand{\Qlate}{{Q_\textrm{late}}}
\newcommand{\Rall}{\R\times\R^n\times\R^n}
\newcommand{\Ctest}{C_c^\infty}
\newcommand{\sourceexp}{{r}}
\newcommand{\txexp}{{p_1}}
\newcommand{\vexp}{{p_2}}
\newcounter{step_count}[section]
\newcommand{\step}[1]{\stepcounter{step_count} \smallskip \noindent{\textbf{Step \arabic{step_count}:} #1}}
\title[Continuity for Nonlocal Kinetic Equations]{H\"{o}lder Continuity for a Family of Nonlocal Hypoelliptic Kinetic Equations}
\author[Stokols]{Logan F. Stokols} 
\address[L. F. Stokols]{\newline Department of Mathematics, \newline The University of Texas at Austin, Austin, TX 78712, USA}
\email{lstokols@math.utexas.edu}
\date{\today}
\subjclass[2010]{35H10,35B65,47G20,35Q84} 
\keywords{Fokker-Planck Equation, Fractional Laplacian, H\"older regularity,  De Giorgi method}
\thanks{\textbf{Acknowledgment.} This work was partially supported by the NSF Grant DMS 1614918. }
\begin{document}

\begin{abstract}
In this work, Holder continuity is obtained for solutions to the nonlocal kinetic Fokker-Planck Equation, and to a family of related equations with general integro-differential operators.  These equations can be seen as a generalization of the Fokker-Planck Equation, or as a linearization of non-cutoff Boltzmann.  Difficulties arise because our equations are hypoelliptic, so we utilize the theory of averaging lemmas.  Regularity is obtained using De Giorgi's method, so it does not depend on the regularity of initial conditions or coefficients.  This work assumes stronger constraints on the nonlocal operator than in the work of Imbert and Silvestre \cite{is}, but allows unbounded source terms.  
\end{abstract}

\maketitle \centerline{\date}

\section{Introduction}\label{sec:intro}

We study in this paper the family of nonlocal kinetic equations
\begin{gather} \label{eq:main}
\kinet f = \L f + a, \\
\L f := \int K(t,x,v,w) [f(w)-f(v)] \, dw. \nonumber
\end{gather}
The kernel $K$ can be any measurable function which is symmetric in $v$ and $w$ and which satisfies a coercivity bound,
\begin{equation}\label{eq:kappa_bound} \begin{gathered}
K(t,x,v,w) = K(t,x,w,v), \qquad K(t,x,v,v+w) = K(t,x,v,v-w) \\
\indic{|v-w| \leq 6} \frac{1}{\kappa} |v-w|^{-(n+2s)} \leq K(t,x,v,w) \leq \kappa |v-w|^{-(n+2s)}
\end{gathered}\end{equation}
for some constants $0 < s < 1$ and $\kappa > 1$.  The function $a$ is a source term we take to be in some Lebesgue space, the variables $t$, $x$ and $v$ are taken in $\R$, $\R^n$, and $\R^n$ respectively, and we restrict ourselves to the case $2s < n$. The integral defining $\L$ is taken in the principle value sense.  

These models are used extensively in nuclear- and astro-physics (c.f. Zaslavsky \cite{z}, Goychuk \cite{g}, and Haubold and Mathai \cite{hm}) to model the behavior of neutral particles moving through a plasma (c.f. Larsen and Keller \cite{lk}).  They can also model two-species particle fields wherein the test particles are of a very dilute species (\cite{g}).   The theory of anomalous diffusion (Mellet \cite{mellet} and Mellet, Mischler, and Mouhot \cite{mmm}) derives the small-mean-free-path limit of fractional kinetic equations such as \eqref{eq:main} and shows that these equations represent the mesoscopic behavior of fat-tailed equilibrium distributions.  These fat-tailed distributions appear in physical observations from astrophysics (\cite{lk} and Mendis and Rosenberg \cite{mr}).  

One notable special case of \eqref{eq:main} is the fractional kinetic Fokker-Planck Equation, corresponding to $\L = (-\Laplace_v)^s$ or equivalently to a homogeneous kernel $K(t,x,v,w) = C_{n,s} |v-w|^{-n-2s}$.  The (local) kinetic Fokker-Planck Equation is obtained in the limit $s \to 1$, correpsponding to $\L = - \Laplace_v$.  

If we think of $f$ as a density function for a collection of particles, with $t$, $x$, and $v$ being time, space, and velocity respectively, then the equation \eqref{eq:main} states that these particles move freely through space with their velocities changing in a stochastic manner.  If the velocity of a given particle varied according to the Weiner process, then $f$ would obey a (local) kinetic Fokker-Planck Equation.  However, when the velocity of each particle varies according to a Levy process (without drift), the density function obeys \eqref{eq:main}.  A Levy process, unlike the Weiner process, allows individual particles to change velocity suddenly and discontinuously, which better approximates the effect of elastic collisions.  

Another important model from the statistical mechanics of particles is the Boltzmann Equation
\[ \kinet f = Q(f,f). \]
In the non-cutoff case, the Boltzmann Equation sometimes enjoys a regularization effect similar the fractional Fokker-Planck equation (Alexandre, Morimoto, Ukai, Xu, and Yang \cite{amuxy}).  Our equation \eqref{eq:main} is closely related to the linear approximation of the bilinear collision operator $Q(\cdot,\cdot)$.  If the mass, energy, and entropy of a solution are assumed to be uniformly bounded, then regularization due to hypoellipticity is observed for the Boltzmann Equation (Imbert and Silvestre \cite{is}), and also for the closely related Landau Equation (Henderson and Snelson \cite{hs}, Cameron, Silvestre, and Snelson \cite{css}).  Note that \cite{is} rewrites the Boltzmann equation in the form \eqref{eq:main}, but with kernel satisfying weaker constraints than \eqref{eq:kappa_bound}.  Their regularity results are discussed below.  The most important assumption these papers require is that the mass is bounded away from the vacuum, which is connected to the coercivity of the collision operator.  In \cite{hst}, Henderson, Snelson, and Tarfulea show that this assumption really does hold for the Landau Equation.  See Mouhot \cite{m} for a thorough review of the current state of research on this front.  


Equation~\eqref{eq:main} is a typical hypoelliptic equation.  Although regularization of the integral operator happens only in $v$, we will gain regularity in $t$, $x$ thanks to the mixing property of the transport operator.  This is reminiscent of the hypoelliptic theory based on $C^\infty$ of H\"{o}rmander \cite{h} and Kolmogorov.  Averaging lemmas such as \cite{glps} (Golse, Lions, Perthame, Sentis) can be seen as an $H^s$ theory of hypoellipticity.  

This $H^s$ theory has already been applied specifically to the nonlocal kinetic Fokker-Planck Equation.  Lerner, Morimoto, and Pravda-Starov \cite{lmp} showed that solutions to certain fractional kinetic equations are in a Sobolev space $H^\sigma$ in all three variables.  This result was inspired by the work on hypoelliptic equations by Bouchut \cite{bouchut}, which is discussed in more detail below.  The precise amount of Sobolev regularity is improved and expanded upon, for example, by Morimoto and Xu \cite{mx} and by Li \cite{l}.  In fact, \cite{mx} obtains $C^\infty$ solutions in the case of no source term and $\L$ a specific operator similar to $(-\Laplace)^s$.  

This paper extends a $C^\alpha$ hypoellipticity theory, as was first introduced for kinetic Fokker-Planck by Golse, Imbert, Mouhot, and Vasseur \cite{gimv}.  They show that solutions to the (local) kinetic Fokker-Planck Equation
\[ \kinet f = \Laplace_v f \]
are H\"{o}lder continuous.  In \cite{im}, Imbert and Mouhot show that, for certain initial data, the nonlinear Fokker-Planck Equation has smooth solutions for all time.  They utilize the H\"{o}lder continuity of \cite{gimv}, as well as a Schauder-type estimate.  In \cite{is}, Imbert and Silvestre obtain H\"{o}lder continuity for a class of nonlocal kinetic Fokker-Planck-type equations with operators $\L$ more general than those considered in the present paper, and with uniformly bounded source terms.  

The seminal work on averaging lemmas is by Golse, Lions, Perthame and Sentis in 1988 \cite{glps}, which shows that solutions to $\kinet f = g$ have their weighted velocity averages $\rho[f] = \int \eta f dv$ in $H^{1/2}$, assuming $f$ and $g$ are in $L^2$.  This result had precursers in \cite{gps} and Agoshkov \cite{a}.  Many results followed, see for example DiPerna, Lions, and Meyer \cite{dlm} and DeVore and Petrova \cite{dp}, which show various levels of regularity for $\rho[f]$ assuming different regularity measures of $f$ and $g$.  

Notable in the history of averaging lemmas is \cite{bouchut}, which showed that if $f$ is regular in $v$ (in the Sobolev sense) then not only is $\rho[f]$ regular but so is $f$ itself.  This powerful result was followed by generalizations in \cite{lmp}, \cite{mx}, and \cite{l} which are especially relevant to \eqref{eq:main}.  We've used these results to establish the regularity needed to justify our calculations, as explained in Section~\ref{sec:intro}, but we do not rely on their quantitative estimates.  

Instead, the primary averaging lemma that we utilize is by Bezard \cite{bezard}.  Like Golse et al. but unlike Bouchut, this lemma gives regularity only for the density $\rho[f]$.  Bezard requires only that $f$ and $g$ lie in a negative Sobolev space $H^{-s}_v$, which gives us plenty of flexibility.  

Our proof follows the De Giorgi method, pioneered by De Giorgi in \cite{dg} (c.f. also Vasseur \cite{v}, \cite{sv}, Caffarelli and Vasseur \cite{cv}, Caffarelli, Chan, and Vasseur \cite{nio}, and \cite{gimv}).  We are particularly inspired by \cite{gimv}, which applies De Giorgi's method to a kinetic equation, and \cite{nio}, which applies the method to a nonlocal integro-differential operator.  

For two functions $f,g \in H^s(\R^n)$, and $t \in \R$ and $x \in \R^n$, define the bilinear operator
\[ B_{t,x}(f,g) = B(f,g) := \frac{1}{2}\int K(t,x,v,w) [f(w)-f(v)] [g(w)-g(v)] \, dw dv, \]
and note that
\begin{align*}
\int g(v)\L(f)(v) \,dv &= \int g(v) \int K [f(w)-f(v)]dw \, dv 
\\ &= \iint K [f(w)-f(v)] g(v) \,dwdv
\\ &= \frac{1}{2} \paren{\iint K [f(w)-f(v)]g(v) \, dwdv + \iint K [f(v)-f(w)] g(w) \,dvdw}
\\ &= - \frac{1}{2} \paren{\iint K [f(w)-f(v)][g(w)-g(v)] \,dwdv}
\\ &= - B_{t,x}(f,g).
\end{align*} 

We call $f \in L^2(Q;H^s(\R^n))$ a weak solution to \eqref{eq:main} on a domain $Q \subseteq \R \times \R^n$ when
\[ - \iiint f \kinet \phi \,dvdxdt = - \iint B(f,\phi) \,dxdt + \iiint a \phi \,dvdxdt \qquad \forall \phi \in L^2(Q; H^s(\R^n)). \]

Our main theorem is 
\begin{theorem}[Main theorem] \label{thm:main}
Given constants $s \in (0,1)$, $\kappa > 1$, and $2s < n \in \N$, there exist exponents $\alpha \in (0,1)$ and $\sourceexp_0 > 2$ such that for any open set $\Omega \subseteq \R^n$, $T > 0$, constant $\sourceexp_0 < r \leq \infty$, and source term
\[ a \in L^\sourceexp([0,T] \times \Omega \times \R^n) \cap L^2([0,T] \times \Omega \times \R^n), \]
there exists a constant such that the following is true:

If 
\[ f \in L^\infty([0,T)\times\Omega \times\R^n) \cap L^2([0,T)\times \Omega; H^s(\R^n)), \]
is a weak solution to \eqref{eq:main} subject to \eqref{eq:kappa_bound}, then $f$ is in $C^\alpha((0,T)\times\Omega\times\R^n)$.  

\begin{sloppypar}
Morover, for any $0 < \bar{T} < T$ and any compact set $\bar{\Omega} \subset \Omega$, there exists a constant $C = C(n,s,\kappa,\Omega,\bar{\Omega},T,\bar{T}) > 0$ independent of $f$ such that the following bound holds:
\[ \norm{f}_{C^\alpha\paren{[\bar{T},T]\times\bar{\Omega}\times B_1}} \leq C \paren{\norm{f_0}_{L^\infty\paren{[0,T]\times\Omega\times \R^n}} + \norm{a}_{L^\sourceexp\paren{[0,T]\times\Omega\times \R^n}}}.  \]
\end{sloppypar}
\end{theorem}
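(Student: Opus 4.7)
The plan is to implement the De Giorgi strategy adapted to the hypoelliptic nonlocal setting, following \cite{gimv} for the local kinetic case and \cite{nio} for nonlocal parabolic operators. After reducing to a model problem on a unit kinetic cylinder $Q_1 := (-1,0] \times B_1 \times B_1$ via the scaling $(t,x,v) \mapsto (r^{2s}t, r^{1+2s}x, rv)$ compatible with \eqref{eq:main}, and normalizing so that $\|f\|_{L^\infty} \leq 1$ with the source small, H\"{o}lder continuity at the origin will follow from iterating a diminishing-oscillation estimate $\operatorname{osc}_{Q_{r/2}} f \leq (1-\theta)\operatorname{osc}_{Q_r} f$ across dyadic kinetic cylinders.

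The technical backbone is a Caccioppoli-type energy inequality for truncations $(f-\ell)_+$. Testing the weak formulation against $(f-\ell)_+\phi^2$ yields a bound on $\sup_t \|(f-\ell)_+\phi\|_{L^2_{x,v}}^2$ and on the dissipation $\iint B_{t,x}((f-\ell)_+\phi,(f-\ell)_+\phi)\,dx\,dt$ in terms of the $L^2$-norm of $(f-\ell)_+$ on the support of $\nabla\phi$, a nonlocal tail term arising from the kernel acting far from $v$, and the source contribution. The coercivity bound in \eqref{eq:kappa_bound} makes the dissipation control $[(f-\ell)_+\phi]_{H^s_v}^2$ from below, and the fractional Sobolev embedding $H^s_v \hookrightarrow L^{2n/(n-2s)}_v$ (available because $2s<n$) supplies the gain of $v$-integrability that drives the iteration.

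Next I would prove the first De Giorgi lemma, an $L^2 \to L^\infty$ propagation: there exists $\nu>0$ such that $\|f_+\|_{L^2(Q_1)} \leq \nu$ together with $\|a\|_{L^{\sourceexp}(Q_1)}$ small forces $f\leq 0$ on $Q_{1/2}$. The argument iterates the energy inequality on shrinking cylinders $Q_{r_k}$ with truncation levels $\ell_k \uparrow 0$, producing a nonlinear recursion $U_{k+1} \leq C^k U_k^{1+\beta}$ for $U_k := \|(f-\ell_k)_+\|_{L^2(Q_{r_k})}^2$ that decays to zero whenever $U_0$ is small. The lower bound $\sourceexp_0>2$ on the source exponent emerges here, from the H\"{o}lder inequality needed to absorb the source against the integrability gain from Sobolev embedding.

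The second and most delicate step is an intermediate-value lemma: if $f\leq 1$ on $Q_2$ and $|\{f\leq 0\} \cap Q_1| \geq \mu > 0$, then $|\{f \geq 1-\delta\} \cap Q_{1/2}|$ is smaller than $|Q_{1/2}|$ by a quantitative amount depending on $\mu$, which after an affine rescaling feeds back into the first lemma to produce the oscillation decay. To establish it I would introduce the cascade $f_k := 2^k(f-1+2^{-k})_+$, observe that each $f_k$ satisfies an equation $\kinet f_k = \L f_k + g_k$ with $g_k \in L^2_{t,x,v} + L^2_{t,x}H^{-s}_v$ (collecting truncation errors, the source, and the nonlocal tail governed by $\|f\|_{L^\infty}$ and the kernel decay $|v-w|^{-(n+2s)}$), and apply B\'ezard's averaging lemma \cite{bezard} to upgrade the velocity averages $\rho[f_k] = \int \eta f_k\,dv$ to a fractional Sobolev space in $(t,x)$. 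The resulting compactness in $(t,x)$ transfers the $v$-measure information on $\{f\leq 0\}$ into $(t,x)$-measure information across the cascade, closing the dichotomy. The hardest part will be precisely this step: the hypoelliptic structure of \eqref{eq:main} precludes any direct compactness in $(t,x)$ from $v$-regularity alone, so the averaging lemma is indispensable, and threading it through a De Giorgi cascade while quantitatively tracking the nonlocal tails entering at every level is where the real analytic work lies.
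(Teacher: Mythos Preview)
Your overall strategy (De Giorgi iteration, scaling, oscillation decay) matches the paper's, but there is a genuine gap in your first De Giorgi lemma. You claim the Sobolev embedding $H^s_v \hookrightarrow L^{2n/(n-2s)}_v$ ``supplies the gain of $v$-integrability that drives the iteration,'' but in the kinetic setting this is not enough: the energy inequality gives you $L^\infty_t L^2_{x,v}$ and $L^2_{t,x}H^s_v$, neither of which improves integrability in $x$. Since your cylinders shrink in $x$ and the cutoff error $f_+\, v\!\cdot\!\nabla_x\phi$ contributes a factor $2^k$ at each step, you cannot close the recursion $U_{k+1}\le C^k U_k^{1+\beta}$ without an $x$-integrability gain. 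The paper confronts this directly: Step~1 of the proof of Proposition~\ref{thm:DG1} applies B\'ezard's averaging lemma to (a barrier for) the truncations $f_k$ to obtain $\|f_k\|_{L^{p_1}_{t,x}L^1_v}$ with $p_1>2$, then interpolates this against the $L^2_{t,x}L^{p_2}_v$ bound from Sobolev in $v$ to get a genuine $L^q(t,x,v)$ gain with $q>2$. You have placed the averaging lemma only in the second De Giorgi lemma; it is already indispensable in the first.

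Your plan for the second De Giorgi lemma also diverges from the paper and is underspecified. You propose a cascade $f_k=2^k(f-1+2^{-k})_+$ and invoke averaging-lemma compactness to ``transfer $v$-measure information into $(t,x)$-measure information,'' but you do not say what functional of the cascade is controlled or how the dichotomy closes. The paper instead argues by contradiction and compactness on a sequence of counterexamples $f_i$: averaging-lemma compactness (applied to $\fip^2$, not to a cascade) produces a strong $L^2_{\mathrm{loc}}$ limit that takes only the values $1$ and $1+F$; the \emph{cross term} $-B(f_+,f_-)$ in the energy inequality is then exploited (with a family of soft cutoffs $\psi_\lambda$) to force the limit to be constant in $v$ on $B_3$; finally a transport inequality $\kinet f^+\le C$ combined with a cone-geometry argument contradicts the jump in $(t,x)$. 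None of these ingredients---the cross term, the additive soft cutoffs $\psi_\theta$ growing like $|v|^{s/2}$ (in place of your multiplicative $\phi$), or the cone argument---appears in your outline, and for $s<1/2$ the $H^s_v$ regularity alone does not rule out jump discontinuities in $v$, so the cross-term mechanism is essential.
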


Although the assumption \eqref{eq:kappa_bound} on the kernel is a natural one for studying absolutely continuous kernels from an energy perspective, it is too strict to apply to e.g. the Boltzmann equation because the collision kernel may not be absolutely continuous or symmetric.   As a result, in the case $a \in L^\infty$, our result is included in the result of \cite{is}.  Their proof does not use a averaging lemma, instead utilizing a careful study of the Green's function for the fractional Kolmogorov equation.  They employ a Krylov approach to obtain a weak Harnack inequality. The advantage of our stronger assumtptions on the kernel is that our proof can be entirely energy based, which allows us to consider source terms which are not uniformly bounded.  We are also able to take a unified variational approach to the cases $s < 1/2$ and $s > 1/2$ by adapting the technique of \cite{nio} to the kinetic context.  

The assumption that solutions are in $L^\infty$ will hold in particular when the initial data and source term are both in $L^\infty$.  In such a case, we could obtain a maximum principle by computing $\ddt \iint (f-C-t\norm{a}_\infty)_+^2 dvdx$.  

With arbitrary source term, a more robust $L^\infty$ bound can sometimes be obtained by adapting Proposition~\ref{thm:DG1} below.  As stated, this proposition requires an assumption of uniformly bounded growth for large values of $v$, to avoid interactions between high-velocity particles and the boundary $\del \Omega$ of our spatial domain.  Though outside the scope of the present paper, this assumption could be removed with proper boundary conditions.  For example, if we take $x \in \mathbb{T}^n$ the torus, then solutions will be $L^\infty$ at any positive time.  

In the case that $K$ is homogeneous near the origin, meaning equal to $|v-w|^{-n-2s}$ for $|v-w|$ sufficiently small, we can obtain existence of an $L^2(H^s)$ weak solution from \cite{mx} Theorem 1.1 (by treating the difference between $\L f$ and $(-\Laplace)^sf$ as a source term).  When $K$ is not homogeneous near the origin, our result is an a priori estimate.  In particular, when a uniform $L^\infty$ bound exists (as discussed above), we can obtain existence of continuous solutions through the method of continuity.  


The symmetry assumption posed in \eqref{eq:kappa_bound} is actually two symmetry assumptions.  The former, $K(t,x,v,w) = K(t,x,w,v)$, is crucial to the weak formulation of the problem and hence is used throughout this paper.  The latter assumption $K(t,x,v,v+w) = K(t,x,v,v-w)$ is really only used in the proof of Lemma~\ref{thm:psi_properties}.  It is necessary because otherwise, in the case $s \geq 1/2$, the operator $\L$ might not be bounded even from $\Ctest$ to $L^\infty$.  We list here a few alternative assumptions, any one of which could replace the latter symmetry assumption of \eqref{eq:kappa_bound} with no loss of generality.  
\begin{itemize}
\item For any $C^2$ function $\phi$, $\norm{\L \phi}_\infty \leq C \norm{\phi}_{C^2}$. 
\item The parameter $s$ is strictly less than $1/2$.  
\item For any $t,x,v \in \Rall$, $\int_{B_1} w K(t,x,v,w) \,dw = 0$.  
\item The function $K(t,x,v,v+w)$ is independent of $v$. 
\end{itemize}

The lower bound on the exponent $\sourceexp$ for the source term is
\[ \sourceexp_0 = \frac{n (1+s) (n+1)}{s} \paren{2 \frac{s}{n} + \frac{1}{2} + \frac{n}{2s}}. \]
This bound is strictly greater than 2, and it is also strictly greater than $n+1+n/s$, which is the critical scaling exponent.  This lower bound may not be sharp.   

The remainder of this article is dedicated to the proof of Theorem~\ref{thm:main}.  Section~\ref{sec:preliminaries} contains a few preliminary lemmas.  Sections~\ref{sec:DG1} and \ref{sec:DG2} are dedicated to the proofs of the first and second De Giorgi lemmas, respectively.  Section~\ref{sec:Holder} combines the De Giorgi lemmas to obtain a Harnack inequality that proves Theorem~\ref{thm:main}.  

Throughout this paper, $C$ will represent arbitrary constants which may change from line to line.  A constant is called ``universal'' if it depends only on the dimension $n$, the order $s$ of the operator $\L$, and the coercivity bound $\kappa$.  

The function space $\Ctest$ contains smooth functions with compact support.  


\section{Preliminary Lemmas} \label{sec:preliminaries}

This section contains three lemmas which will be relied upon extensively in the forthcoming sections.  

The operator $\L$ behaves in many ways like the operator $-(-\Laplace_v)^{s} = - \Lambda^{2s}$.  The following lemma codifies the important similarities between the two operators, specifically the relationship between $B$ and the $H^s$ norm, and between $\L$ and the Bessell potential.  
\begin{lemma} \label{thm:B_and_Hs}
There exists a constan $C = C(n,s,\kappa)$ such that, for any function $f \in H^s(\R_n)$, we have the following bounds:
\[ \int |\Lambda^s f|^2 \,dv \leq \inf_{t,x} C\paren{B_{t,x}(f,f) + \int f^2 \,dv}, \]
and
\[ \sup_{t,x} \norm{\bessel^{-s/2} \L_{t,x} f}_{ L^2(\R^n) } \leq C \paren{ \norm{\Lambda^s f}_{L^2(\R^n)}}. \]

\end{lemma}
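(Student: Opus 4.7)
My plan is to prove the two inequalities separately. The first compares the bilinear form $B_{t,x}$ to the Gagliardo seminorm, exploiting the lower bound on $K$ that holds only for $|v-w|\le 6$. The second uses duality with $L^{2}$ together with the upper bound on $K$.

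For the first inequality, recall that $\|\Lambda^{s} f\|_{L^{2}}^{2}$ is equal, up to a constant $c_{n,s}$, to the Gagliardo seminorm $\iint |v-w|^{-(n+2s)}[f(w)-f(v)]^{2}\,dw\,dv$. I would split this double integral into the near region $\{|v-w|\le 6\}$ and the far region $\{|v-w|>6\}$. On the near region, the lower bound in \eqref{eq:kappa_bound} gives $|v-w|^{-(n+2s)}\le \kappa K(t,x,v,w)$, so the near contribution is controlled by $2\kappa B_{t,x}(f,f)$ uniformly in $t,x$. On the far region, bound $[f(w)-f(v)]^{2}\le 2f(w)^{2}+2f(v)^{2}$ and compute $\int_{|v-w|>6}|v-w|^{-(n+2s)}\,dw = C(n,s)\, 6^{-2s}$, obtaining a multiple of $\|f\|_{L^{2}}^{2}$. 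Summing, $\|\Lambda^{s}f\|_{L^{2}}^{2}\le C(B_{t,x}(f,f)+\|f\|_{L^{2}}^{2})$ for every $t,x$, and taking the infimum yields the claim.

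For the second inequality, the dual formulation reads
\[
\|\bessel^{-s/2}\L_{t,x}f\|_{L^{2}} = \sup_{\|g\|_{L^{2}}=1}\left\langle \L_{t,x}f,\ \bessel^{-s/2}g\right\rangle,
\]
using that $\bessel^{-s/2}$ is self-adjoint as a Fourier multiplier. Set $h:=\bessel^{-s/2}g$; the multiplier $|\xi|^{s}(1+|\xi|^{2})^{-s/2}$ is bounded by $1$, so $\|\Lambda^{s}h\|_{L^{2}}\le \|g\|_{L^{2}}$. The identity from the excerpt gives $\langle \L_{t,x}f,h\rangle = -B_{t,x}(f,h)$. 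Applying Cauchy--Schwarz inside the double integral defining $B_{t,x}$ yields
\[
|B_{t,x}(f,h)|\le B_{t,x}(f,f)^{1/2}\,B_{t,x}(h,h)^{1/2},
\]
and for either argument the upper bound $K\le \kappa|v-w|^{-(n+2s)}$ (valid everywhere, not just near the diagonal) gives $B_{t,x}(\phi,\phi)\le C\|\Lambda^{s}\phi\|_{L^{2}}^{2}$. Combining, $|\langle \bessel^{-s/2}\L_{t,x}f,g\rangle|\le C\|\Lambda^{s}f\|_{L^{2}}\|g\|_{L^{2}}$ with $C=C(n,s,\kappa)$ independent of $t,x$, and taking the supremum finishes the argument.

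There is no real obstacle here: the only delicate points are the principal-value nature of $\L_{t,x}$ (handled implicitly by working through the bilinear form $B_{t,x}$, exactly as in the weak formulation derived in the excerpt) and the fact that the lower bound on $K$ is localized near the diagonal, which is why the first inequality needs the additive $\int f^{2}\,dv$ and the splitting into near/far regions. The Bessel-potential estimate used in the second part is standard once one interprets $\bessel^{-s/2}$ as the Fourier multiplier $(1+|\xi|^{2})^{-s/2}$.
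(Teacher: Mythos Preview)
Your proof is correct and follows essentially the same approach as the paper: the near/far splitting for the first inequality is identical (the paper writes it starting from $B(f,f)$ rather than from the Gagliardo seminorm, but it is the same computation), and for the second inequality the paper likewise bounds $\langle \L f, h\rangle$ by Cauchy--Schwarz against the upper bound $K\le \kappa|v-w|^{-(n+2s)}$ and then dualizes through $(1-\Delta_v)^{-s/2}$. The only cosmetic difference is that the paper uses an asymmetric Cauchy--Schwarz weighting (inserting $|w|^{\pm(n+2s)/2}$) where you invoke the symmetric form $|B(f,h)|\le B(f,f)^{1/2}B(h,h)^{1/2}$, but both yield the same bound.
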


Since these results are true irrespective of $t$ and $x$, we will omit their mention in the sequel.  

\begin{proof}
For the first inequality, simply calculate
\begin{align*}
B(f, f) &= \iint K \bracket{f(w) - f(v)}^2 \,dwdv
\\ &\geq \frac{1}{\kappa} \iint_{|v-w| \leq 6}  \frac{\bracket{f(w) - f(v)}^2}{|v-w|^{n+2s}} \,dwdv
\\ &= \frac{1}{\kappa} \iint \frac{\bracket{f(w) - f(v)}^2}{|v-w|^{n+2s}} \,dwdv - \frac{1}{\kappa} \iint_{|v-w| \geq 6}  \frac{\bracket{f(w) - f(v)}^2}{|v-w|^{n+2s}} \,dwdv
\\ &\geq \frac{1}{\kappa} \iint \frac{\bracket{f(w) - f(v)}^2}{|v-w|^{n+2s}} \,dwdv - \frac{2}{\kappa} \int f(v)^2 {\int  \frac{\indic{|u|\geq 6}}{|u|^{n+2s}} du} \,dv - \frac{2}{\kappa} \int f(w)^2 {\int \frac{\indic{|u|\geq 6}}{|u|^{n+2s}} du} \,dw
\\ &	= C(n,s,\kappa) \int |\Lambda^s f|^2 \, dv - C'(n,s,\kappa) \int f^2 \,dv.
\end{align*}

For the second inequality, let $g$ be any function in $H^s(\R^n)$.  For $t$ and $x$ fixed, we have the following bound on inner products in $v$:
\begin{align*}
\abs{\chevron{\L f, g}}_v &= \abs{\iint [f(v+w)-f(v)][g(v+w) - g(v)] K(t,x,v,v+w) \,dwdv}
\\ &= \abs{\iint \paren{[f(v+w)-f(v)]|w|^{\frac{n+2s}{2}}}\frac{[g(v+w) - g(v)]}{|w|^{\frac{n+2s}{2}}} K \,dwdv}
\\ &\leq \paren{\iint [f(v+w)-f(v)]^2 K^2 |w|^{n+2s} \,dwdv}^{1/2} \paren{\iint [g(v+w) - g(v)]^{2} \frac{dwdv}{|w|^{n+2s}}}^{1/2}
\\ &\leq \kappa \paren{\iint [f(v+w)-f(v)]^2 \frac{dwdv}{|w|^{n+2s}}}^{1/2} \paren{\iint [g(v+w) - g(v)]^{2} \frac{dwdv}{|w|^{n+2s}}}^{1/2}
\\ &= C(n,s,\kappa) \paren{\int \abs{\Lambda^s f}^2 \,dv}^{1/2} \norm{g(t,x,\cdot)}_{H^s(\R^n)}.
\end{align*}

Therefore if $\phi$ is any $L^{2}(\R^n)$ test function, then
\begin{align*}
\chevron{\bessel^{-s/2}\L f,\phi} &= \chevron{\L f,\bessel^{-s/2}\phi}
\\ &\leq C(n,s,\kappa) \paren{\int \abs{\Lambda^s f}^2 \,dv}^{1/2}  \norm{\bessel^{-s/2}\phi}_{H^s(\R^n)}
\\ &= C(n,s,\kappa) \paren{\int \abs{\Lambda^s f}^2 \,dv}^{1/2} \norm{\phi}_{L^2(\R^n)}.
\end{align*}

The lemma follows by taking a supremum over all such $\phi$.  

\end{proof}

We now come to the energy inequality.  An inequality of this type is to be expected due to the parabolic flavor of Equation~\eqref{eq:main}, and it is in some ways the most important quality of our equation.  Notice that the inequality gives control over the regularity in $v$, but not in $t$ or $x$.  

\begin{lemma}[Energy Inequality] \label{thm:energy_inequality}
There exists a universal constant $C = C(n,s,\kappa)$ such that the following is true:

Let $T < S < 0$ be times, and let $\Omega \subseteq \R^n$ be an open region in space and $\bar{\Omega} \subseteq \Omega$ a compact subset.  Let $R > 0$ a radius and $\psi:\R^n \to \R$ a function of velocity.  Denote $Q := (T,0] \times \Omega$ and $\bar{Q} := [S,0]\times\bar{\Omega}$, and define
\begin{align*} 
\delta &:= \min\paren{|T-S|, \operatorname{dist}(\bar{\Omega}, \Omega^\mathsf{C})}.
\end{align*}

Let $f \in L^2(\Q; H^s(\R^n))$ be any weak solution to \eqref{eq:main} subject to \eqref{eq:kappa_bound} satisfying
\[ f(t,x,v) \leq \psi(v) \qquad \forall (t,x) \in Q, |v| \geq R, \] 
and denote $f_+ := \max(f-\psi,0)$ and $f_- := \max(\psi-f,0)$ so that $f = f_+ + \psi - f_-$.  

Then the following energy inequality holds:
\begin{multline*}
\iint_{\bar{Q}} B(f_+,f_+) \,dxdt - \iint_{\bar{Q}} B(f_+,f_-) \,dxdt \leq \\
\frac{C}{\delta} \bracket{ R \iint_Q \int f_+^2 \,dvdxdt + \paren{\sup_{|v|<R}\abs{\L\psi(v)}} \iint_Q \int f_+ \,dvdxdt + \norm{a}_{L^\sourceexp(Q)} \norm{f_+}_{L^{\sourceexp^\ast}(Q)} }. 
\end{multline*}
\end{lemma}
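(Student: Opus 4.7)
The plan is to test the weak formulation of \eqref{eq:main} against $\phi = \eta^2 f_+$, where $\eta(t,x) \in [0,1]$ is a smooth cutoff with compact $x$-support in $\Omega$, $\eta(T,\cdot) \equiv 0$, $\eta \equiv 1$ on $\bar{Q}$, and $|\partial_t\eta|, |\nabla_x \eta| \leq C/\delta$. The key observation is that since $\psi = \psi(v)$ satisfies $\kinet\psi = 0$, the chain rule for the positive part yields $f_+ \kinet f_+ = f_+ \kinet f$ almost everywhere.

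Using the Leibniz identity $\eta^2 \kinet(f_+^2/2) = \kinet(\eta^2 f_+^2/2) - (f_+^2/2)\,\kinet \eta^2$ and integrating over $Q \times \R^n$, the total-derivative integral reduces to the non-negative boundary contribution $\frac{1}{2}\iint \eta^2(0,x) f_+^2(0,x,v)\,dvdx$: the $x$-divergence vanishes by compact $x$-support of $\eta$, and the $t=T$ boundary term vanishes since $\eta(T,\cdot) \equiv 0$. Combining this with the identity
\[ \iiint \eta^2 f_+ \kinet f \,dvdxdt = -\iint \eta^2 B(f, f_+)\,dxdt + \iiint \eta^2 a f_+\,dvdxdt \]
from the weak equation, the decomposition $B(f, f_+) = B(f_+, f_+) - B(f_+, f_-) + B(\psi, f_+)$ (via $f = f_+ + \psi - f_-$ and symmetry of $B$), and dropping the non-positive boundary term on the right, we arrive at
\[ \iint \eta^2 [B(f_+, f_+) - B(f_+, f_-)]\,dxdt \leq \iiint \frac{f_+^2}{2} \kinet \eta^2\,dvdxdt + \iiint \eta^2 a f_+\,dvdxdt - \iint \eta^2 B(\psi, f_+)\,dxdt. \]

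Since $f_+$ and $f_-$ have disjoint pointwise support, direct expansion gives $B(f_+, f_-) = -\iint K(v,w) f_+(w) f_-(v)\,dwdv \leq 0$, so the left-hand integrand is non-negative and $\eta^2$ may be replaced by $\indic{\bar{Q}}$ on the left. On the right, $f_+$ vanishes on $\{|v| \geq R\}$, so $|\kinet \eta^2| \leq 2\eta(|\partial_t\eta| + |v||\nabla_x\eta|) \leq CR/\delta$ on its support, handling the transport term; the identity $-B(\psi, f_+) = \int \L\psi \cdot f_+\,dv$ combined with the $v$-support of $f_+$ bounds the $\psi$-term by $\sup_{|v|<R}|\L\psi| \cdot \int f_+\,dv$; and H\"older's inequality handles the source term. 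Absorbing the $\eta^2 \leq 1$ factors into the overall $C/\delta$ yields the stated inequality.

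The main obstacle is the rigorous justification of the chain rule and integration by parts at the level of weak solutions $f \in L^2(Q; H^s(\R^n))$: one must verify that $\eta^2 f_+$ is an admissible test function and that the identity $f_+ \kinet f_+ = f_+ \kinet f$ holds in an appropriate sense. This is standard but delicate, typically handled via Steklov averaging in $t$ combined with the density of smooth functions in $L^2(Q; H^s(\R^n))$, relying on the auxiliary regularity supplied by \cite{lmp}, \cite{mx}, and \cite{l}.
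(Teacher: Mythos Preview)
Your argument is correct and follows the same energy-method strategy as the paper: test against a squared cutoff times $f_+$, use $f_+\kinet f = \tfrac{1}{2}\kinet f_+^2$ (since $\psi$ is independent of $t,x$ and $f_+ f_- \equiv 0$), decompose $B(f,f_+) = B(f_+,f_+) - B(f_+,f_-) + B(\psi,f_+)$, and estimate each right-hand term. The only technical difference is in how the time boundary is handled: you take a space-time cutoff $\eta(t,x)$ with $\eta(T,\cdot)\equiv 0$, so the initial-time energy vanishes and the $\partial_t\eta^2$ contribution is absorbed into the $C/\delta$ transport error; the paper instead uses a purely spatial cutoff $\phi(x)$, integrates the resulting identity from a variable initial time $\tau$ to $0$, and then averages over $\tau\in[T,S]$ to convert the initial-energy term $\iint(\phi f_+(\tau))^2\,dvdx$ into $\frac{1}{|T-S|}\iiint_Q(\phi f_+)^2$. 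Both devices are standard and yield the same $C/\delta$ factor and the same final inequality; your version has the minor side effect of producing $C(1+R)/\delta$ rather than $CR/\delta$ in front of $\iiint f_+^2$, which is harmless in every application in the paper.
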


The constant $\delta$ here is the distance from $\bar{Q}$ to the parabolic boundary of $Q$.  

The quantity $B(f_+,f_+)$ is, as shown in Lemma~\ref{thm:B_and_Hs}, related to the fractional Dirichlet energy of $f_+$.  We have an additional dissipation term $-B(f_+,f_-)$ which we call the cross term.  Because $f_+$ and $f_-$ have disjoint supports,
\begin{align*}
-B(f_+,f_-) &= -\iint K [f_+(w)-f_+(v)][f_-(w)-f_-(v)]\,dwdv
\\ &= \iint K [f_+(w)f_-(v) + f_+(v)f_-(w)] \,dwdv
\\ &= 2 \iint K f_+(v)f_-(w) \,dwdv.
\end{align*} 
In particular this means the cross term is non-negative.  The cross term represents, in a sense, the energy which is lost when we localize $f$ to create $f_+$.  The bound on the cross term is critical to our proof in Section~\ref{sec:DG2} of De Giorgi's second lemma.  

\begin{remark}
The quantity $f_-$ appears on the left but not the right hand side of the energy inequality.  This means in particular that the growth and decay of any solution to \eqref{eq:main} is constrained by the local behavior alone.  
\end{remark}


\begin{proof}
Define $\phi:\R^n \to [0,1]$ a function which equals 1 on $\bar{\Omega}$, which is supported on $\Omega$, and which is Lipschitz with constant $\norm{\phi}_{C^1} \leq 2 \delta\n$.  

Multiplying the left side of Equation~\eqref{eq:main} by the quantity $\phi^2 f_+$, we see that
\begin{align*}
\phi^2 f_+ \kinet f &= \phi^2 f_+ \kinet(f_++\psi-f_-)
\\ &= \phi^2\frac{1}{2} \kinet f_+^2 + \phi^2 f_+ \kinet \psi - \phi^2 f_+ \kinet f_-
\\ &= \frac{\phi^2}{2} \kinet f_+^2
\end{align*}
because $\psi$ is independent of $x$ and $t$, and $f_+$ and $f_-$ have disjoint supports.  

Since $f_+ \in L^2(H^s)$, we can multiply Equation~\eqref{eq:main} by $2 \phi^2 f_+$ and integrate with respect to $x$ and $v$ to obtain
\begin{multline*}
\ddt \iint (\phi f_+)^2 \,dvdx - \iint f_+^2 v\cdot\grad_x(\phi^2) \,dvdx = -2 \int \phi^2 B(f_+, f_+ + \psi - f_-) dx + 2 \iint \phi^2 a f_+ \,dvdx
\\ = -2 \int \phi^2 B(f_+, f_+) dx - 2 \iint \phi^2 f_+ \L\psi \,dvdx + 2 \int \phi^2 B(f_+, f_-) dx + 2 \iint \phi^2 a f_+ \,dvdx.
\end{multline*}

For any $S \leq \tau \leq T$, we integrate this equality from $\tau$ to $0$ in time and then rearrange to obtain
\begin{multline*} 
\iint (\phi f_+(0))^2 \,dvdx + 2\int_\tau^0\!\!\!\!\int \phi^2 B(f_+,f_+) \,dxdt - 2\int_\tau^0\!\!\!\!\int \phi^2 B(f_+,f_-) \,dxdt \\
=\!\!\! \int_\tau^0\!\!\!\!\iint \paren{v\!\cdot\!\grad_x \phi^2} f_+^2 \,dvdxdt + 2\!\!\int_\tau^0\!\!\!\!\iint \phi^2 {\L(\psi)} f_+ \,dvdxdt + \int_\tau^0\!\!\!\!\iint \phi^2 a f_+ \,dvdxdt + \iint (\phi f_+(\tau))^2 \,dvdx. 
\end{multline*}

In particular,
\begin{multline*} 
2\int_T^0\!\!\!\!\int \phi^2 B(f_+,f_+) \,dxdt - 2\int_T^0\!\!\!\!\int \phi^2 B(f_+,f_-) \,dxdt \\
\leq \int_S^0\!\!\!\!\iint \abs{v\cdot\grad_x \phi^2} f_+^2 \,dvdxdt + 2\int_S^0\!\!\!\!\iint \phi^2 \paren{\abs{\L(\psi)}+|a|} f_+ \,dvdxdt + \iint (\phi f_+(\tau))^2 \,dvdx. 
\end{multline*}

Now only one term depends on $\tau$.  If we take the average value over $\tau \in [S,T]$ for both sides of the inequality, we obtain
\begin{multline*} 
2\int_T^0\!\!\!\!\int \phi^2 B(f_+,f_+) \,dxdt - 2\int_T^0\!\!\!\!\int \phi^2 B(f_+,f_-) \,dxdt \\
\leq \int_S^0\!\!\!\!\iint \abs{v\cdot\grad_x \phi^2} f_+^2 \,dvdxdt + 2\int_S^0\!\!\!\!\iint \phi^2 \paren{\abs{\L(\psi)}+|a|} f_+ \,dvdxdt + \frac{1}{|S-T|}\int_S^T\!\!\!\!\iint (\phi f_+)^2 \,dvdxdt. 
\end{multline*}

Our energy inequality follows.  
\end{proof}

The classical technique to localize a solution to a PDE is multiplication by a compactly supported cutoff function.  This allows us to disregard the behavior of the solution outside a specified region, while the localized function usually solves the original PDE, modulo some sort of error term.  One should not expect this technique to work for nonlocal PDE; the far-away behavior of the solution cannot be completely disregarded.  

Instead, we must localize by a ``soft cutoff,'' which is a fixed function $\psi$ that vanishes in a specified local region but grows without bound outside that region.  We have already seen soft cutoffs used in the statement and proof of Lemma~\ref{thm:energy_inequality} just above.  
%
%

Throughout the following sections, we will utilize a few different soft cutoff functions.  We will define all of our soft cutoff functions here and list all their relevant properties, then refer back to this lemma as we use them.  These functions $\psi^1$ and $\psi_\theta$ are tailored to the required assumptions of Lemmas~\ref{thm:DG1} and \ref{thm:DG2} respectively.  They also must have certain specific relationships with eachother in order to prove Lemma~\ref{thm:oscillation}, which is why we prefer to construct them here all at once.  

\begin{lemma} \label{thm:psi_properties}
Let $s \in (0,1)$ and $2s < n \in \N$ be specified constants.  There exists a function $\psi^1:\R^n \to \R^+$ and a family of functions $\psi_\theta:\R^n \to \R^+$ indexed by $\theta \in (0,1)$ with the following properties:

\begin{enumerate}[(i)]
\item\label{L_of_psi} There exists a constant $C_\psi$ such that for all $v \in \R^n$
\[ \sup_{t,x} \abs{\L_{t,x}\psi^1(v)} \leq C_\psi, \qquad \sup_{t,x} \abs{\L_{t,x}\psi_\theta(v)} \leq C_\psi, \]
and for all $|v| \leq 3$
\[ \sup_{t,x} \abs{\L_{t,x}\psi_\theta(v)} \leq C_\psi \theta^{3s/2}. \]

\item\label{psi_vanish_on_balls} For $|v| \leq 1$, 
\[ \psi^1(v)=0 \]
 and for $|v| \leq \theta\n$, 
 \[ \psi_\theta(v)=0. \]

\item\label{psi_increasing} For any $\theta < \vartheta$, and for all $v \in \R^n$
\[ \psi_\theta(v) \leq \psi_\vartheta(v) \leq \psi^1(v). \]

\item\label{psi_geq_one_plus_psi} For all $|v| \geq 2$, for any $\theta \in (0,1)$,
\[ 1 + \psi_\theta(v) \leq \psi^1(v). \]

\item\label{psi_scaled_inequality} For each $\theta$, there exists $\eps_0 = \eps_0(s,\theta)$ such that $\eps < \eps_0$ implies that for all $|v| > \eps\n$,
\[ \psi_\theta\paren{v} \geq 2\psi_\theta(\eps v) + 2. \] 
\end{enumerate}
\end{lemma}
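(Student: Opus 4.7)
I would build both families from a single smooth radial profile. Fix an exponent $\beta\in(0,2s)$ and let $\Psi:\R^n\to[0,\infty)$ be a $C^\infty$, radially non-decreasing function with $\Psi\equiv 0$ on $B_1$ and $\Psi(u)=|u|^\beta-1$ for $|u|\geq 2$, smoothly interpolated on the annulus $1\leq|u|\leq 2$. Define $\psi_\theta(v):=\Psi(\theta v)$ and $\psi^1(v):=\Psi(v)+\chi(v)$, where $\chi$ is a fixed $C^\infty$ radial function with $\chi\equiv 0$ on $B_1$, $\chi\equiv 1$ on $\R^n\setminus B_2$, and $0\leq\chi\leq 1$. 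Properties (ii)--(iv) are then immediate: (ii) holds since $\Psi(\theta v)=0$ iff $|\theta v|\leq 1$; (iii) follows from radial monotonicity of $\Psi$ together with $\theta\leq\vartheta<1$; and (iv) holds because $\chi(v)=1$ when $|v|\geq 2$, which yields $\psi^1(v)=\Psi(v)+1\geq\Psi(\theta v)+1=\psi_\theta(v)+1$. For (v), I would case-split on whether $\theta\eps|v|\leq 2$; both sub-cases reduce to concrete polynomial inequalities in $\theta|v|$ (for instance $(\theta|v|)^\beta(1-2\eps^\beta)\geq 1$ in the unbounded regime), satisfied once $\eps<\eps_0(s,\theta)\sim\theta$ is small enough.

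The bulk of the work is (i). I would first invoke the second symmetry of \eqref{eq:kappa_bound} to rewrite
\[ \L\psi(v)=\tfrac{1}{2}\int K(t,x,v,v+w)\bigl[\psi(v+w)+\psi(v-w)-2\psi(v)\bigr]\,dw, \]
making the principal-value cancellation explicit and leaving an absolutely convergent integral for any smooth $\psi$ of polynomial growth of order less than $2s$. The uniform bound $|\L\psi^1|\leq C_\psi$ then follows from a near/far split at $|w|=1$: near zero one uses $\|D^2(\Psi+\chi)\|_\infty<\infty$ (which is why $\Psi$ must be smoothed across $|u|=1$) together with the integrability of $|w|^{2-n-2s}$ for $s<1$; far from zero one uses the growth bound $\Psi(v\pm w)\lesssim(1+|v|+|w|)^\beta$ together with the integrability of $|w|^{\beta-n-2s}$ at infinity, which requires exactly $\beta<2s$.

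For the refined estimate on $\psi_\theta$, the key observation is that the pointwise upper bound $K\leq\kappa|w|^{-n-2s}$ is fully scale-invariant. Substituting $u=\theta w$ in the symmetrized formula produces
\[ |\L\psi_\theta(v)|\leq C\theta^{2s}\int|u|^{-n-2s}\bigl|\Psi(\theta v+u)+\Psi(\theta v-u)-2\Psi(\theta v)\bigr|\,du, \]
and the same near/far analysis applied to $\Psi$ at the base point $\theta v$ shows the remaining integral is bounded uniformly in $\theta v$ (it in fact decays like $(1+|\theta v|)^{\beta-2s}$). Hence $|\L\psi_\theta(v)|\leq C\theta^{2s}$ for every $v$, which is both $\leq C_\psi$ and $\leq C_\psi\theta^{3s/2}$ since $\theta<1$ and $2s>3s/2$. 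The main obstacle will be verifying this uniformity across all base points $\theta v$: near the transition $|\theta v|\approx 1$ the smoothness of $\Psi$ across its support boundary is essential, and as $|\theta v|\to\infty$ the strict inequality $\beta<2s$ is what forces the tail to be integrable. The role of the symmetry hypothesis $K(v,v+w)=K(v,v-w)$ in all of this is precisely to license the symmetric rewrite and kill the would-be linear term, whose singularity at $w=0$ is non-integrable when $s\geq 1/2$.
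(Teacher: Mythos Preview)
Your construction is correct and takes a genuinely different route from the paper's. The paper builds $\psi_\theta$ by \emph{translating} a fixed radial profile $g(r)\sim r^{s/2}$ outward to start at radius $\theta^{-1}$, i.e.\ $\psi_\theta(v)=g_{\theta^{-1}}(|v|)$, and obtains the refined bound $|\L\psi_\theta|\lesssim\theta^{3s/2}$ on $B_3$ by observing that the support of $\psi_\theta$ lies at distance $\gtrsim\theta^{-1}$, so only the kernel tail $|w|\gtrsim\theta^{-1}$ contributes and one integrates $|w|^{s/2-n-2s}$ there. You instead \emph{dilate} a fixed profile, $\psi_\theta(v)=\Psi(\theta v)$, and extract the small parameter via the substitution $u=\theta w$ together with scale-invariance of the pointwise upper bound on $K$. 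Your route yields the sharper exponent $\theta^{2s}$ and is conceptually cleaner; the paper's route is more elementary in that it never needs the uniform-in-base-point control of the second-difference integral.

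One exposition gap you should close: your stated far-part argument for $\psi^1$---``the growth bound $\Psi(v\pm w)\lesssim(1+|v|+|w|)^\beta$ together with integrability of $|w|^{\beta-n-2s}$''---does not give a bound uniform in $v$; that crude estimate produces $\int_{|w|\geq 1}|w|^{-n-2s}(1+|v|+|w|)^\beta\,dw\sim|v|^\beta$ for large $|v|$. You need the second-difference structure in the far region too, splitting further at $|w|\sim|v|$: for $1\leq|w|\leq|v|/2$ the second difference is $O(|v|^{\beta-2}|w|^2)$ via the Hessian bound on $|\cdot|^\beta$, and for $|w|\geq|v|/2$ it is $O(|w|^\beta)$; both pieces integrate to $O(|v|^{\beta-2s})$, which is uniformly bounded precisely because $\beta<2s$. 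You evidently know this---it is exactly your ``decays like $(1+|\theta v|)^{\beta-2s}$'' remark for $\psi_\theta$---so the same analysis at $\theta=1$ handles $\psi^1$ and should be stated there rather than the crude growth bound.
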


\begin{proof}
First define a function $g:[0,\infty) \to [0,\infty)$ such that, for all $x > 1$,
\[ g(x) = x^{s/2} \] 
but $g(0) = g'(0) = 0$, and in the interval $[0,1]$ let $g$ be defined so that $g$ is smooth and non-decreasing, and $g(x) \leq x^{s/2}$.  

Next define functions $g_r$ for each $r > 0$ by
\[ g_r(x) = \begin{cases}
0 & x < r  \\
g(x-r) & x \geq r.
\end{cases}\] 

Then $g_r$ is pointwise-decreasing in $r$ and both $\norm{g_r''}_{L^\infty}$ and the H\"{o}lder semi-norm $\norm{g_r}_{\dot{C}^{s/2}}$ are finite and independent of $r$.  

We'll define
\[ \psi_\theta(v) := g_{\theta\n}(|v|). \]

Let $C_1 > 1$ be a constant large enough that for any $\theta \in (0,1)$, for all $|v| \geq 2$
\[ 1 + \psi_\theta(v) \leq C_1 g_1(|v|). \]
Then define
\[ \psi^1(v) = C_1 g_1(|v|). \]

Properties \eqref{psi_vanish_on_balls}, \eqref{psi_increasing}, and \eqref{psi_geq_one_plus_psi} all follow immediately from the construction.  Notice also that all of these functions have uniformly bounded second derivatives and uniformly bounded $\dot{C}^{s/2}$ semi-norms.  

Let $\psi$ be either $\psi^1$ or any of the $\psi_\theta$, and let $v \in \R^n$ be chosen.  We wish to calculate $\L\psi(v)$, so let us break up the defining integral into the ``near" part and the ''far" part.  
\[ \L \psi(v) = \int_{|w|<1} K(v,v+w)[\psi(v+w)-\psi(v)] \,dw + \int_{|w|\geq 1} K(v,v+w)[\psi(v+w)-\psi(v)] \,dw. \]

For the near part, we utilize the fact that $\psi$ is smooth with bounded second derivative.  
We apply Taylor's theorom to find that
\[ \psi(v+w)-\psi(v) = D\psi(v)\cdot w + D^2\psi(u) w \otimes w \]
for some $u$ on the line segment between $v$ and $v+w$.  By the symmetry \eqref{eq:kappa_bound} of $K$,
\[ \int_{|w|<1} K(v,v+w) D\psi(v)\cdot w \,dw = 0. \]  
Note that this integral must be understood in the principal value sense.  

The remainder is 
\[ \int_{|w|<1} K(v,v+w)D^2\psi(u) w \otimes w \,dw \leq C \kappa \int_{|w|<1} \frac{|w|^2}{|w|^{n+2s}} \,dw, \]
with $C$ here being the bound on $\norm{D^2\psi}_\infty$ which is independent of $\psi$.  
Since $n + 2s - 2 < n$, the integral is finite.  

Notice that if $\psi = \psi_\theta$ with $\theta < 1/4$ and if $|v|\leq 3$ then the near part of the integral is in fact zero.  

For the far away part, we utilize the fact that $\psi$ is H\"{o}lder continuous in $\dot{C}^{s/2}$ and estimate 
\[ \abs{\psi(v+w) - \psi(v)} \leq C|w|^{s/2} \]
with $C$ independent of $\psi$.  
The integral of the far away part becomes
\[ \int_{|w|\geq 1} K(v,v+w)[\psi(v+w)-\psi(v)] \,dw \leq C \kappa \int_{|w|\geq 1} \frac{|w|^{s/2}}{|w|^{n+2s}} \,dw. \]
Since  $n+2s-\frac{s}{2} > n$, the integral is finite.  

In the case $\psi = \psi_\theta$ with $\theta < 1/4$ and $|v|\leq 3$, $\psi(v)=0$ so we can make the stronger estimate 
\[ \abs{\psi(v+w) - \psi(v)} \leq g_{\theta\n}(|w| + 3) \leq \max(|w|+3-\theta\n,0)^{s/2}. \]
The integral of the far away part becomes
\[ \int_{|w|\geq 1} K(v,v+w)[\psi(v+w)-\psi(v)] \,dw \leq  \kappa \int_{|w|\geq \paren{\theta\n-3}} \frac{(|w|+3-\theta\n)^{s/2}}{|w|^{n+2s}} \,dw \leq C \int_{|w| > \frac{\theta\n}{4}} \frac{dw}{|w|^{n+\frac{3}{2}s}}. \]
This integral is proportional to $\theta^{3s/2}$.  The property \eqref{L_of_psi} follows.  

All that remains is to show \eqref{psi_scaled_inequality}, so fix some value of $\theta$.  We'll show the equivalent claim 
\begin{equation}\label{altered_scaled_inequality}
\psi_\theta(v/\eps) \geq 2 \psi_\theta(v) + 2 \qquad \forall |v| \geq 1. 
\end{equation}
For $|v| \geq \theta\n+1$ and any $0 < \eps < 1$, we can say 
\[ \psi_\theta(v/\eps) = (|v|/\eps-\theta\n)^{s/2} \geq (|v|/\eps - \theta\n/\eps)^{s/2} = \eps^{-s/2} (|v| - \theta\n)^{s/2}. \]
There exists $0 < \eps_1 < 1$ and $r_1 > \theta\n+1$ so that if $\eps < \eps_1$ and $|v| \geq r_1$ then
\[ \eps^{-s/2} (|v| - \theta\n)^{s/2} \geq 2 \psi_\theta(v) + 2. \]
Now take $\eps_0 < \eps_1$ small enough that $\psi_\theta(1/\eps_0) \geq 2 \psi_\theta(r_1) + 2$.  Now for $1 \leq |v| \leq r_1$ the inequality \eqref{altered_scaled_inequality} holds because
\[ \psi_\theta\paren{\frac{v}{\eps}} \geq \psi_\theta(1/\eps_0) \geq 2 \psi_\theta(r_1) + 2 \geq 2 \psi_\theta(v) + 2, \]
and for $|v| > r_1$ it holds by construction of $r_1$.  This proves property \eqref{altered_scaled_inequality}.  
\end{proof}


\section{First De Giorgi Lemma}\label{sec:DG1}

In this section we will prove De Giorgi's first lemma, which states that if a function solving \eqref{eq:main} is bounded in some region in an integral sense, then it is pointwise bounded in a smaller region.  

The function $\psi^1$ in the statement of this lemma is defined in Lemma~\ref{thm:psi_properties}.  

\begin{proposition}[De Giorgi's First Lemma]\label{thm:DG1}
There exists a universal constant $\delta_0 > 0$ such that the following is true:

For any $f \in L^2([-2,0]\times B_2; H^s(\R^n))$ a weak solution to \eqref{eq:main} subject to \eqref{eq:kappa_bound} with source term $\norm{a}_{L^\sourceexp([-2,0]\times B_2 \times\R^n)} \leq 1$, if
\[ f(t,x,v) \leq \psi^1(v) \qquad \forall x \in B_2, t \in [-2,0], |v| \geq 2 \]
holds and 
\[ \iiint_{[-2,0]\times B_2\times B_2} \max(f-\psi^1,0)^2 \, dvdxdt \leq \delta_0 \]
holds, then
\[ f(t,x,v) \leq \frac{1}{2} \qquad \forall x \in B_1, t \in [-1,0], v \in B_1. \]
\end{proposition}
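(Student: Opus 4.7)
The plan is to execute a De~Giorgi iteration simultaneously in $t$, $x$, and $v$. Set truncation levels $c_k = \tfrac{1}{2}(1 - 2^{-k})$, so $c_0 = 0$ and $c_k \nearrow \tfrac{1}{2}$, and a decreasing family of cylinders
\[ Q_k = [-1-2^{-k}, 0] \times B_{1+2^{-k}} \times B_{1+2^{-k}}, \]
with $Q_0 \subset [-2,0]\times B_2 \times \R^n$ and $Q_\infty = [-1,0]\times B_1 \times B_1$. Define the truncated function and its energy
\[ f_k := (f - \psi^1 - c_k)_+, \qquad U_k := \iiint_{Q_k} f_k^2 \,dvdxdt. \]
Since $\psi^1 \equiv 0$ on $B_1$ by Lemma~\ref{thm:psi_properties}\eqref{psi_vanish_on_balls}, the desired conclusion $f \leq \tfrac{1}{2}$ on $Q_\infty$ is equivalent to $U_k \to 0$. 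The hypothesis gives $U_0 \leq \delta_0$, and I will derive a nonlinear recursion $U_{k+1} \leq C^k U_k^{1+\beta}$ for some $\beta > 0$; smallness of $\delta_0$ then forces $U_k \to 0$ by the standard comparison with a geometric sequence.

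For the energy step, apply Lemma~\ref{thm:energy_inequality} with the soft cutoff $\psi := \psi^1 + c_k$, outer cylinder $(-2,0]\times B_2$, and inner cylinder the space-time slab of $Q_{k+1}$. Since $\L$ annihilates constants, $|\L\psi| = |\L\psi^1| \leq C_\psi$ by Lemma~\ref{thm:psi_properties}\eqref{L_of_psi}, and the hypothesis $f \leq \psi^1 \leq \psi$ on $\{|v|\geq 2\}$ is preserved. Discarding the non-negative cross term, the energy inequality yields a bound of the form
\[ \iint B(f_k,f_k)\,dxdt \;\leq\; C \cdot 2^k \bigl[\, U_k + \iiint_{Q_k} f_k\,dvdxdt + \|a\|_{L^r(Q_k)}\|f_k\|_{L^{r^*}(Q_k)}\,\bigr] \]
over the space-time part of $Q_{k+1}$, and the same argument supplies an analogous bound on $\sup_t \iint f_k^2\,dvdx$. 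Using $r > 2$ and Cauchy--Schwarz to control the lower-order terms, and then Lemma~\ref{thm:B_and_Hs} to turn $B(f_k,f_k)$ into $\|\Lambda^s f_k\|_{L^2_v}^2$, one obtains control of $f_k$ in $L^2_{t,x} H^s_v \cap L^\infty_t L^2_{x,v}$ on $Q_{k+1}$ by a constant of the form $C\cdot 2^{Ck}\bigl(U_k + U_k^{1/2}\bigr)$.

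The crucial step is now a hypoelliptic gain of integrability. Writing $\kinet f_k \leq a + \L\psi^1 + \L f_k + \text{(tail correction)}$ in the distributional sense and using Lemma~\ref{thm:B_and_Hs} together with the pointwise bound on $\L\psi^1$, one obtains that $\kinet f_k$ is bounded in $L^2_{t,x} H^{-s}_v$ on $Q_{k+1}$. Since $f_k$ itself is controlled in $L^2_{t,x} H^s_v$, Bezard's averaging lemma \cite{bezard} produces $H^\alpha_{t,x}$ regularity for velocity averages $\int \eta(v) f_k\, dv$, for any smooth $\eta$ supported in $B_{1+2^{-k-1}}$. Combined with the $L^\infty_t L^2_{x,v}$ control, the $L^2_t H^s_v$ dissipation, and the fact that $v$ is restricted to a bounded ball in $Q_{k+1}$, a Gagliardo--Nirenberg / Sobolev interpolation upgrades the $L^2$ control of $f_k$ on $Q_{k+1}$ to an $L^{2+\varepsilon}$ bound, with norm $\leq C\cdot 2^{Ck}(U_k^{1/2} + U_k^{1/(2+\varepsilon)})$, for some universal $\varepsilon > 0$.

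Finally, the standard truncation trick closes the loop. Where $f_{k+1} > 0$ one has $f_k \geq c_{k+1} - c_k = 2^{-k-2}$, so Chebyshev gives $|\{f_{k+1}>0\} \cap Q_{k+1}| \leq 4^{k+2} U_k$, and H\"older's inequality yields
\[ U_{k+1} \;\leq\; \|f_{k+1}\|_{L^{2+\varepsilon}(Q_{k+1})}^2 \cdot \bigl|\{f_{k+1}>0\}\cap Q_{k+1}\bigr|^{\varepsilon/(2+\varepsilon)} \;\leq\; C^k\, U_k^{1 + \beta}, \]
with $\beta = \varepsilon/(2+\varepsilon) > 0$, which is the desired recursion. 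The most delicate point is the gain-of-integrability step: when $f_k$ is substituted into $\L$, the principal-value integral picks up contributions from the tail region $\{f > \psi^1 + c_k\}^{\mathsf{C}}$ that are not small in $L^2_v$ but only controlled pointwise via the hypothesis $f \leq \psi^1$; organizing these terms into an $H^{-s}_v$ right-hand side suitable for the averaging lemma is where the specific properties of $\psi^1$ from Lemma~\ref{thm:psi_properties} enter essentially.
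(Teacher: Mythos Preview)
Your outline is structurally correct and mirrors the paper's three-step strategy, but there is a genuine gap at the averaging-lemma step. Bezard's lemma (Lemma~\ref{thm:avg_lemma}) requires an \emph{equation} $\kinet f_k = g$ with $g$ controlled in $L^2_{t,x}H^{-s}_v$; you only produce an \emph{inequality} $\kinet f_k \leq G$. The defect $G - \kinet f_k$ is the non-negative quantity $\L f_k - \indic{f_k>0}\,\L(f-\psi_k)$, which at points where $f_k(v)>0$ equals $\int K(v,w)\,(\psi_k-f)_+(w)\,dw$. This depends on $f_- = (\psi_k - f)_+$, for which you have \emph{no} bound whatsoever: the hypotheses give only $f \leq \psi^1$ for $|v|\geq 2$, never a lower bound on $f$. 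So the defect cannot be ``organized into an $H^{-s}_v$ right-hand side'' using properties of $\psi^1$, as you suggest in your final paragraph; the tail correction you allude to is genuinely out of reach.

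The paper's fix is to sidestep the defect entirely by constructing a barrier. One defines $g_k$ as the solution to the linear equation $\kinet g_k = F_k + \L g_k$ on $(T_{k-1},\infty)\times\R^n\times\R^n$ with zero data at $t=T_{k-1}$, where $F_k$ collects the \emph{controllable} terms (those involving $f_k$, $\L\psi_k$, $a$, and cutoffs in $t,x$). A maximum principle shows $g_k \geq \eta_k\phi_k f_k \geq 0$, and since $g_k$ satisfies an honest equation with $L^2$ source, the averaging lemma applies to it, yielding $\int\eta\,g_k\,dv \in H^\beta_{t,x}$ and hence $\|f_k\|_{L^{p_1,p_1,1}}$ bounds. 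From there the Riesz--Thorin interpolation with the $L^{2,2,p_2}$ bound from Sobolev in $v$ proceeds exactly as you outline. An alternative route, used in the paper's proof of Proposition~\ref{thm:DG2}, is to work with $f_k^2$ rather than $f_k$: then $\kinet f_k^2$ is an \emph{equality} with right-hand side $\L(f_k^2)$ plus two non-positive terms whose total mass is controlled by the energy inequality (the dissipation and the cross term), and one can smooth those measure terms into $L^2$ via a Bessel potential of higher order. Either device resolves the gap; your sketch as written does not.
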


As in most De Giorgi-style proofs, we take a sequence of cutoffs of our function and show that their $L^2$ norm tends to zero.  We show this by producing a non-linear recursive inequality.
The key to the proof is the inequality \eqref{recursive_inequality}, which is located at the end of the second step.  This inequality tells us that our function cannot have very bad singularities, because any singularity which is $L^2$ integrable is also $L^q$ integrable for some specific $q > 2$.  Classically such an inequality is produced using the energy inequality and Sobolev embedding, but in this case we will also require an averaging lemma.  

Our proof will proceed in three steps.  In the first step, we will apply the averaging lemma to our cutoff function to show that it has higher integrability in the $t$ and $x$ variables.  Actually we will apply the averaging lemma to a barrier function, because our solution itself has certain negative measures in its derivatives.  This is fine, since higher integrability for the barrier function trivially implies higher integrability for the original function.  In the second step, we will obtain higher integrability in the $v$ variable using the usual technique (with the energy inequality and Sobolev embedding).  Then we use Riesz-Thorin interpolation to combine our integrability in $t$, $x$ and $v$.  In the third and final step, we produce the standard nonlinear recursion and argue that our cutoffs tend to zero in the limit. 

\begin{proof}
We begin by specifying the sequence of cutoff functions.  For $k \in \N$, consider soft cutoffs
\[ \psi_k := \psi^1 + \frac{1}{2} - 2^{-k-1} \]
so that $\psi_0 = \psi^1$ and in the limit $\psi_\infty = \psi^1 + \frac{1}{2}$.  Then we have a sequence of cutoff functions
\[ f_k := \max(f - \psi_k,0). \]

We'll make frequent use of the fact that for any $k$,
\begin{equation}\label{indicator_bound} 
\indic{f_k > 0} \leq 2^{k+1} f_{k-1}. 
\end{equation}

We also must specify a sequence of space-time regions.  Define
\[ T_k := -1-2^{-k}, \qquad B^k := \{x \in \R^n: |x| \leq 1 + 2^{-k}\}, \qquad Q_k := [T_k,0]\times B^k \]
so that $Q_0 = [-2,0]\times B_2$ and in the limit $Q_\infty = [-1,0]\times B_1$.  Notice that the distance from the interior of $B^k$ to the boundary of $B^{k-1}$ is $2^{-k}$, and that $T_k - T_{k-1} = 2^{-k}$.  

For brevity, we will use $\int_k$ to denote an integral with bounds $[T_k,0]$ or $B^k$ or $Q_k$, as shall be clear from context.  We also frequently will use $C^k$ to mean $\bracket{C(n,s,\kappa)}^k$, a quantity which grows geometrically in $k$ for $n$, $s$, and $\kappa$ held constant.  
%

\step{Higher integrability in $t,x$}

\newcommand{\etak}{\eta_{k,\eps}}

Define $\etak$ a smooth function which is supported on $[T_{k-1},0]$ and equal to 1 on $[T_k, -\eps]$.  Then define $\mu_\eps(t) = \indic{[-\eps,0]}\del_t \etak$ the derivative of $\etak$ near 0, and assume without loss of generality that $\mu_\eps \leq 0$.  The derivative of $\etak$ will be bounded uniformly in $\eps$ \emph{except} for the blowup near 0 which is captured by $\mu_\eps$.  In symbols, $\sup_\eps \norm{\del_t \etak - \mu_\eps}_\infty \leq C^k$.  

In addition, let $\phi_k(x)$ be a smooth function supported on $B^{k-1}$ and equal to 1 on $B^k$, with derivative $\norm{\grad_x \phi_k}_\infty \leq C^k$.

We want to apply the averaging lemma to $\etak \phi_k f_k$, so let's apply the transport operator to this function.  
\begin{gather*}
\begin{aligned}
\kinet (\etak \phi_k f_k) &= f_k \kinet (\etak\phi_k) + \etak\phi_k \kinet f_k
\\ &= f_k \kinet (\etak \phi_k) + \etak \phi_k \indic{f > \psi_k} \kinet (f - \psi_k)
\\ &= f_k \kinet (\etak \phi_k) + \etak \phi_k \indic{f > \psi_k} \L f + \etak \phi_k \indic{f > \psi_k} a
\end{aligned}
\\ \qquad \quad \quad
= f_k \kinet (\etak \phi_k) + \etak \phi_k \indic{f > \psi_k} \L \psi_k + \etak \phi_k \indic{f > \psi_k} a + \etak \phi_k \indic{f > \psi_k} \L (f - \psi_k).
\end{gather*}

By a well known pointwise inequality (c.f. Caffarelli and Sire \cite{cs}),
\[ \indic{f > \psi_k} \L (f - \psi_k) \leq \L f_k. \]
Also $\mu_\eps \leq 0$.  Therefore if we define
\[ F_k := \etak f_k (v\cdot\grad_x \phi_k) + \phi_k f_k (\del_t \etak - \mu_\eps) + \etak \phi_k \indic{f > \psi_k} \L \psi_k + \etak \phi_k \indic{f > \psi_k} a, \]
then
\[ \kinet ( \etak \phi_k f_k) \leq F_k + \L(\etak \phi_k f_k). \]

The source term $F_k$ is in $L^2(\Rall)$.  From \eqref{indicator_bound}, Lemma~\ref{thm:psi_properties} property \eqref{L_of_psi}, and the definitions of $\phi_k$, $\etak$ and $\mu_\eps$, 
\begin{align}
\iiint\displaylimits_{\Rall} F_k^2 &\leq \iiint_{k-1} \bracket{\etak^2 (v\cdot\grad_x\phi_k)^2 + \phi_k^2 (\del_t \etak - \mu_\eps)^2} f_k^2 + \iiint_{k-1} (\etak \phi_k)^2 \bracket{(\L \psi_k)^2 + a^2} \indic{f_k > 0} \nonumber
\\ &\leq C^k \iiint_{k-1} f_k^2 + C^k \iiint_{k-1} f_{k-1}^2 + C^k \paren{\iiint_{k-1} f_{k-1}^2}^{1-\frac{2}{\sourceexp}} \nonumber
\\ &\leq C^k \paren{\iiint_{k-1} f_{k-1}^2 }^{1-\frac{2}{\sourceexp}}. \label{Fk_bound}
\end{align}

Because the averaging lemma requires equality, not the inequality that we have, we'll construct a barrier function $g_k$.  Define $g_k$ as some solution to the PDE
\begin{equation}\begin{cases} \label{gk_PDE}
\kinet g_k = F_k + \L g_k & \forall t,x,v \in (T_{k-1},\infty) \times \R^n \times \R^n \\ 
g_k = \etak \phi_k f_k = 0 & t = T_{k-1} \\
g_k = 0 & t < T_{k-1}.
\end{cases}\end{equation}

Since $F_k \in L^2(\Rall)$, a solution $g_k \in L^2_\loc([0,\infty)\times \R^n; H^s(\R^n))$ exists by \cite{mx} (see Section~\ref{sec:intro} for more detail).  

Moreover, $g_k \geq \etak \phi_k f_k \geq 0$ by a maximum principle: the function $\max(\etak \phi_k f_k - g_k,0)$ is a subsolution to $\kinet h = \L h$ so it has non-increasing energy, and it vanishes at $t = T_{k-1}$ so it must be identically zero.  

We'll now produce some bounds on $g_k$.  Take the PDE \eqref{gk_PDE} and multiply it by $g_k$, then integrate over $x \in \R^n$, $v \in \R^n$.  
\[ \ddt \frac{1}{2}\iint g_k^2 \,dvdx = - \int B(g_k,g_k) \,dx + \iint g_k F_k \,dvdx. \]
Now applying Lemma~\ref{thm:B_and_Hs} and H\"{o}lder's inequality,
\begin{equation}\label{energy_inequality_for_g}
\ddt \frac{1}{2}\iint g_k^2 \,dvdx + \frac{1}{\kappa} \iint \abs{\Lambda^s g_k}^2 \,dvdx \leq C \iint g_k^2 \,dvdx + \frac{1}{2}\iint F_k^2 \,dvdx. 
\end{equation}

If we define
\[ G(t) = \iint_{\R^n \times \R^n} g_k^2(t) \,dvdx \]
we see from \eqref{energy_inequality_for_g} that $G$ satisfies
\[ \ddt G(t) \leq C G(t) + \iint F_k^2(t). \]
Also, by construction, $G(T_{k-1}) = 0$.  Thus by Gronwall's inequality, for all $t > T_{k-1}$:
\begin{align*} 
G(t) &\leq e^{C(t-T_{k-1})} \int_{T_{k-1}}^t \iint F_k^2(\tau) \,dvdx \,d\tau
\\ &\leq e^{C(t-T_{k-1})} \iiint_{\Rall} F_k^2 \,dvdxd\tau.
\end{align*}

This means that for any compact interval $K$ in $\R$,
\begin{equation}\label{gk_bound}
\norm{g_k}_{L^\infty(K; L^2(\R^n\times \R^n))} \leq C_K \norm{F_k}_{L^2(\Rall)}. 
\end{equation}
Armed with this inequality, and the fact that $\del_t \chi_K$ is in the dual space of $L^\infty(t)$, we integrate \eqref{energy_inequality_for_g} over $K$:
\begin{align} 
\iiint\displaylimits_{K \times \R^n \times \R^n} \abs{\Lambda^s g_k}^2 \,dvdxdt &\leq C(n,s,\kappa) \paren{ \iiint_{K \times \R^n \times \R^n} g_k^2 + \iiint_{K \times \R^n \times \R^n} F_k^2 \,dvdxdt + \iiint_{\R \times \R^n \times \R^n} g_k^2 \del_t \chi_K} \nonumber
\\ &\leq C_K \iiint_{\Rall} F_k^2 \,dvdxdt. \label{Hs(gk)_bound}
\end{align} 
%
%
%
%

We can now apply Lemma~\ref{thm:avg_lemma}, the Averaging Lemma, to $g_k$.  Let $\eta(v)$ be a $\Ctest(\R^n)$ function which is identically 1 on $v \in B_2$ and non-negative for all $v$, and choose any set, for example $[-3,1]\times B_3$, which compactly contains $[-2,0]\times B_2$.  The lemma yields that
\begin{equation*}
\norm{\int \eta g_k \,dv}_{H^\beta([-2,0]\times B_2)} \leq C\paren{\norm{g_k}_{L^2([-3,1]\times B_3 \times \R^n)} + \norm{\bessel^{-s/2}\paren{F_k + \L g_k}}_{L^2([-3,1]\times B_3 \times \R^n)}} 
\end{equation*}
with $\beta = (2(1+s))\n < 1$.  

Therefore, by the bounds \eqref{Fk_bound} and \eqref{gk_bound}, and by Lemma~\ref{thm:B_and_Hs} and the bound \eqref{Hs(gk)_bound},
\begin{equation}\label{bezard_result}
\norm{\int \eta g_k \,dv}_{H^\beta([-2,0]\times B_2)} \leq C^k \paren{\iiint_{k-1} f_{k-1}^2}^{\frac{1}{2} - \frac{1}{\sourceexp}}. 
\end{equation}

Define $\txexp$ by
\[ \frac{1}{\txexp} = \frac{1}{2} - \frac{\beta}{n+1} = \frac{1}{2} - \frac{1}{2(1+s)(n+1)} \in (0,1/2). \]
By Sobolev embedding,
\begin{equation}\label{sobolev}
\norm{ \int \eta g_k \,dv}_{L^\txexp(t,x)} \leq C \norm{ \int \eta g_k \,dv}_{H^\beta(t,x)}. 
\end{equation} 

Since $f_k$ is supported where $\eta \equiv 1$, the integral $\int \eta f_k \,dv$ is just the $L^1(v)$ norm of $f_k$.  Recall also that $\etak \phi_k f_k \leq g_k$.  Therefore we can bound the $L^{\txexp,\txexp,1}$ norm of $f_k$:
\begin{align*}
\int_{T_k}^{-\eps} \int_{B^k} \paren{ \int f_k \,dv}^\txexp \, dxdt 
 &\leq \iint \paren{ \int \eta \bracket{\etak \phi_k f_k} \,dv}^\txexp \, dxdt
\\ &\leq \iint \paren{ \int \eta g_k \,dv}^\txexp \, dxdt
\end{align*}


Since this inequality is true for all $\eps$, we can chain it with \eqref{bezard_result} and \eqref{sobolev} to conclude that
\begin{equation}\label{t_and_x_integrability}
\norm{f_k}_{L^{\txexp,\txexp,1}(Q_k \times \R^n)} \leq C^k \norm{f_{k-1}}_{L^2(Q_{k-1} \times \R^n)}^{1-\frac{2}{\sourceexp}}. 
\end{equation}

\step{Higher integrability in all three variables}

Since each $f_k$ is supported on $|v|\leq 2$, and $\norm{\L\psi_k}_\infty \leq C_\psi$ by Lemma~\ref{thm:psi_properties}, property \eqref{L_of_psi}, we can apply the energy inequality from Lemma~\ref{thm:energy_inequality} to obtain
\[ \iint_{k+1} B(f_k,f_k) \leq C^k \iiint_k f_k^2 + C^k \iiint_k f_k + C^k \norm{f_k}_{L^{\sourceexp^\ast}(Q_k)}. \]
From this inequality, Lemma~\ref{thm:B_and_Hs}, and \eqref{indicator_bound}:
\[ \iiint_{k+1} \abs{\Lambda^s f_k}^2 \leq C^k \iiint_k f_{k-1}^2 + C^k \paren{\iiint_k f_{k-1}^2}^{1/\sourceexp^\ast}. \]
When $\norm{f_k}_2 < 1$, as we assume without loss of generality, the second term on the right-hand-side will dominate.  

Therefore, letting $\vexp$ be defined by $\frac{1}{\vexp} = \frac{1}{2} - \frac{s}{n}$, we have by Sobolev embedding
\begin{equation}\label{v_integration} 
\norm{f_k}_{L^{2,2,\vexp}(Q_{k+1} \times \R^n)} \leq C^k \norm{f_{k-1}}_{L^2(Q_k \times \R^n)}^{1/\sourceexp^\ast}. 
\end{equation}

Now we wish to utilize Riesz-Thorin interpolation to interpolate between this inequality and \eqref{t_and_x_integrability}.  

Consider $\theta \in [0,1]$ and the function
\[ \theta \mapsto \bracket{\frac{\theta}{2} + \frac{1-\theta}{\txexp}} - \bracket{\frac{\theta}{\vexp} + \frac{1-\theta}{1}}. \]
Because this function is negative at $\theta=0$ and positive at $\theta=1$, it must equal zero at some point $\theta^*$, and at this point we can define $q$ by
\[ \frac{1}{q} := \frac{\theta^*}{2} + \frac{1-\theta^*}{\txexp} = \frac{\theta^*}{\vexp} + \frac{1-\theta^*}{1}. \]
Moreover, since $1/q$ is a nontrivial convex combination of $1/2$ and $1/\txexp$, it must be the case that $q > 2$.  Riesz-Thorin tells us that
\[ \norm{f_k}_{L^{q,q,q}(Q_k \times \R^n)} \leq \norm{f_k}_{L^{2,2,\vexp}(Q_k \times \R^n)}^{\theta^\ast} \norm{f_k}_{L^{\txexp,\txexp,1}(Q_k \times \R^n)}^{1-\theta^\ast}. \]
Combining this with the bounds \eqref{t_and_x_integrability} and \eqref{v_integration},
\begin{equation}\label{recursive_inequality}
\norm{f_k}_{L^q(Q_k\times \R^n)} \leq C^k \norm{f_{k-2}}_{L^2(Q_{k-2}\times \R^n)}^{1-\frac{2}{\sourceexp} + \frac{\theta^\ast}{\sourceexp}}.
\end{equation}

This bound is the key to De Giorgi's first lemma.

\step{The recursion}

This step is standard to all De Giorgi arguments.  It does not depend on the specifics of our PDE \eqref{eq:main} in any way, except through the bound \eqref{recursive_inequality}.  

For any $k$, by \eqref{indicator_bound},
\begin{align*}
\iiint_k f_k^2 &= \iiint_k f_k^2 \indic{f_k>0}^{q-2}
\\ &\leq 2^{(k+1)(q-2)} \iiint_{k} f_{k}^2 f_{k-1}^{q-2}
\\ &\leq C^k \iiint_{k-1} f_{k-1}^q.
\end{align*}

From this and \eqref{recursive_inequality},
\[ \iiint_k f_k^2 \leq C^k \paren{\iiint_{k-3} f_{k-3}^2}^{\frac{q}{2}\paren{1 - \frac{2}{\sourceexp} + \frac{\theta^\ast}{\sourceexp}}}. \]
Since $q$ and $\theta^\ast$ are independent of $\sourceexp$ and $q > 2$, the exponent on this recursive inequality will be greater than 1 for $\sourceexp$ sufficiently large.  Specifically, the exponent exceeds 1 precisely when $\sourceexp > \sourceexp_0$, with $\sourceexp_0$ as defined in Section~\ref{sec:intro}, though we omit the explicit calculation.  

Since the exponent is greater than one, and the sequence
\begin{equation}\label{sequence} 
k \mapsto \iiint_k f_k^2 
\end{equation}
is monotone decreasing, by a standard fact about sequences (c.f. \cite{v}) we can now say that this sequence limits to 0 as $k \to \infty$, provided the initial value
\[ \iiint_{[-2,0]\times B_2 \times \R^n} \max(f-\psi^1,0)^2 \,dvdxdt \leq \delta_0 \]
is sufficiently small.  

Lastly, since the limit of that sequence \eqref{sequence} is zero, by the Lebesgue's monotone convergence theorem
\[ \iiint_{[-1,0] \times B_1 \times \R^n} (f - \psi^1 - \frac{1}{2})_+^2 \,dvdxdt = 0. \]
Since $\psi^1 = 0$ on $B_1$, the proposition is proven.  

\end{proof}


\section{Second De Giorgi Lemma}\label{sec:DG2}

In this section we will prove the second De Giorgi lemma, the intermediate value lemma.  It says that solutions to our PDE cannot have, in a small region, very much measure above a certain value and also very much measure below another value unless the solution also has sufficient measure between the two values.  The lemma is sometimes called an isoperimetric inequality. 

\begin{figure}[h]
\includegraphics[width=.25 \textwidth]{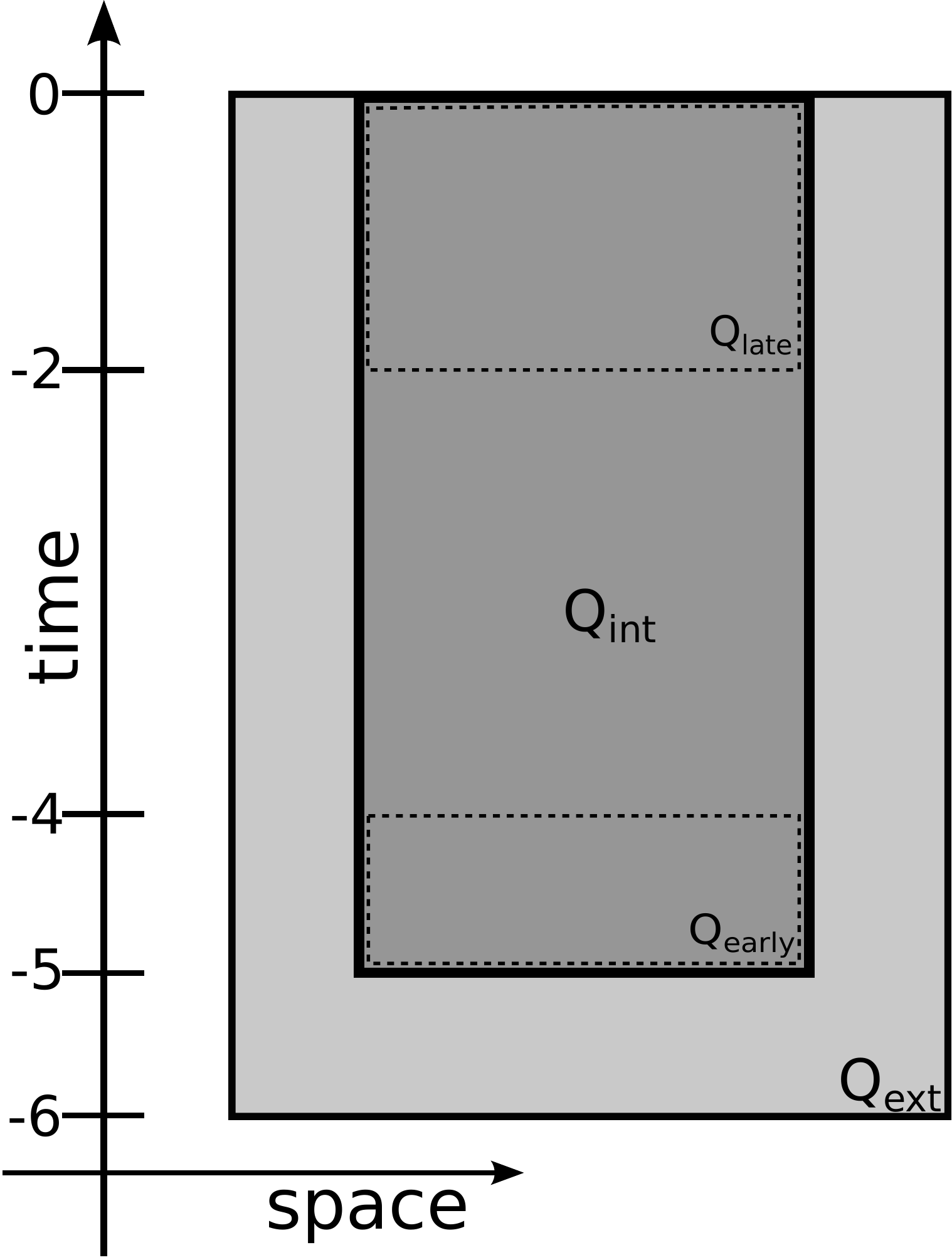}
\caption{Four overlapping cylinders described in Proposition~\ref{thm:DG2}.}
\end{figure}

To state Proposition~\ref{thm:DG2}, we must define four cylindrical regions in space-time:
\begin{align*} 
\Qext &:= [-6,0]\times B_3 \\
\Qint &:= [-5,0]\times B_2 \\
\Qearly &:= [-5,-4]\times B_2 \\
\Qlate &:= [-2,0]\times B_2. \\
\end{align*}

The constant $\delta_0$ in the statement of this proposition is defined in Proposition~\ref{thm:DG1}.  

\begin{proposition}[Second De Giorgi Lemma]\label{thm:DG2}
There exist universal constants $\gamma_0 > 0$ and $0 < \theta_0 < 1/3$ such that the following is true:

For any $f \in L^2(\Qext; H^s(\R^n))$ a weak solution to \eqref{eq:main} subject to \eqref{eq:kappa_bound} with
\[ \norm{a}_{L^\sourceexp(\Qext \times \R^n)} \leq \theta_0 \]
satisfying
\[ |f(t,x,v)| \leq 1 + \psi_{\theta_0}(v) \qquad \forall (t,x,v) \in \Qext \times \R^n, \]
if
\begin{equation}\label{DG2_mass_early} 
|\{f \leq 0\} \cap \Qearly \times B_2| \geq \frac{|\Qearly| \cdot |B_2|}{2} 
\end{equation}
and
\begin{equation}\label{DG2_mass_late}
|\{f \geq 1-\theta_0 \} \cap \Qlate \times B_2| \geq \delta_0 
\end{equation}
then
\begin{equation}\label{DG2_mass_between}
|\{0 < f < 1-\theta_0 \} \cap \Qint \times B_3| \geq \gamma_0. 
\end{equation}
\end{proposition}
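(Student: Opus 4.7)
The plan is to argue by contradiction, following the De Giorgi second-lemma template adapted to a nonlocal kinetic equation: I combine the fractional isoperimetric philosophy of \cite{nio} with the kinetic transport argument of \cite{gimv}, using the energy inequality together with its \emph{non-negative cross term} as the central analytic tool. Assume toward contradiction that $|\{0 < f < 1-\theta_0\} \cap \Qint \times B_3| < \gamma_0$, with $\gamma_0$ a universal constant to be fixed at the end.

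First, I apply the energy inequality to a suitable truncation of $f$. Choose the soft cutoff $\psi(v) := (1-\theta_0) + 2\psi_{\theta_0}(v)$: by Lemma~\ref{thm:psi_properties}\eqref{L_of_psi} one has $|\L\psi|\leq 3C_\psi$ uniformly, and by property \eqref{psi_geq_one_plus_psi} the hypothesis $|f|\leq 1+\psi_{\theta_0}$ implies $f\leq \psi$ for $|v|\geq 2$. Thus Lemma~\ref{thm:energy_inequality} applied on $\bar Q=\Qint\subset Q=\Qext$ with this $\psi$, combined with Lemma~\ref{thm:B_and_Hs}, yields a uniform bound of the form
\begin{equation*}
\iint_{\Qint}\!\norm{\Lambda^s f_+}_{L^2_v}^{2}\,dxdt \;+\; \iint_{\Qint}\!\!\iint K(t,x,v,w)\,f_+(v)f_-(w)\,dw\,dv\,dx\,dt \;\leq\; C(n,s,\kappa),
\end{equation*}
where $f_\pm:=(f-\psi)_\pm$ and $f_+\leq 1$ everywhere. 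The right hand side is controlled because $\norm{a}_{L^{\sourceexp}}\leq \theta_0\leq 1$, $|\L\psi|$ is bounded, and $f_+$ is bounded.

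Second, I translate the DG2 hypotheses into statements about level sets of the bounded truncation $u:=\min(f_+,1)$. Hypothesis \eqref{DG2_mass_early} forces $u$ to vanish on a set of large measure in $\Qearly\times B_2$, while \eqref{DG2_mass_late} forces $u=1$ (modulo an error of order $\theta_0$) on a set of measure $\geq \delta_0$ in $\Qlate\times B_2$. The contradiction hypothesis says the intermediate set $\{0<u<1\}$ has measure less than $\gamma_0$ on $\Qint\times B_3$. I then invoke a De Giorgi-type fractional isoperimetric inequality in the spirit of \cite{nio}: a function uniformly bounded in $\dot H^{s}_v$ and vanishing on a definite fraction of $(x,v)$-space cannot simultaneously equal $1$ on a definite fraction unless the intermediate level set has quantitatively large measure. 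In the kinetic setting this inequality must be coupled with the cross term, which bounds from below the $(v,w)$-interaction between $\{f\leq 0\}$ and $\{f\geq 1-\theta_0\}$ at each common time slice $(t,x)\in\Qint$.

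The main obstacle, and the place where the kinetic structure is essential, is that the cold and hot mass hypotheses hold at \emph{different} time windows ($\Qearly$ versus $\Qlate$), whereas the isoperimetric tool produces bounds slice by slice. The bridge is the cross-term dissipation $\iint K f_+f_-$: by integrating the equation in $t$ and using the $L^\infty$ control on $f$ together with the transport structure, one propagates the cold mass forward from $\Qearly$ into $\Qint$ (and, by the energy identity run backward in time, the hot mass backward from $\Qlate$ into $\Qint$), producing a simultaneous coexistence of cold and hot regions on a positive-measure subset of $\Qint$. Applying the isoperimetric inequality on those time slices forces a lower bound on $\gamma_0$ which contradicts our assumption, provided $\theta_0$ is chosen small enough to absorb the errors coming from $|\L\psi|$, from the $L^{\sourceexp}$ source, and from the large-velocity tails controlled by $\psi_{\theta_0}$, and $\gamma_0$ is then chosen smaller still.
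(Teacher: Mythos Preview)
Your proposal has a genuine gap at the step you yourself flag as ``the main obstacle'': bridging the time separation between the cold mass in $\Qearly$ and the hot mass in $\Qlate$. You write that one ``propagates the cold mass forward from $\Qearly$ into $\Qint$'' by ``integrating the equation in $t$,'' and propagates hot mass backward by ``the energy identity run backward in time.'' Neither of these is a real mechanism. The equation is dissipative and not time-reversible, so running the energy inequality backward does not give control; and integrating forward in $t$ yields $f(t_2)-f(t_1)=\int(\L f+a-v\cdot\nabla_x f)\,dt$, where the $\L f$ term can destroy cold mass arbitrarily fast at the level of a single function $f$. The cross term bounds the \emph{total} interaction $\iint_{\Qint}\iint K f_+f_-$ from above, but your argument would require a \emph{lower} bound on this quantity at simultaneous time slices, and you have no way to produce simultaneous coexistence of hot and cold regions. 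In the purely parabolic setting of \cite{nio} this issue does not arise because the hypotheses there place cold and hot mass in the same time window; that is precisely what fails in the kinetic case.

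The paper resolves this by a compactness argument rather than a direct quantitative one. One assumes no such $\gamma_0,\theta_0$ exist, extracts a sequence $f_i$ with $\theta_0=1/i$ and intermediate measure $\leq 1/i$, and passes to a limit. Strong $L^2_{\loc}$ compactness requires the averaging lemma (Lemma~\ref{thm:avg_lemma}) applied to $\fip^2$ to obtain $H^\alpha_{t,x}$ control on velocity averages, combined with the uniform $H^s_v$ bound from the energy inequality; your sketch omits the averaging lemma entirely. The cross term is then used, not as a slice-by-slice isoperimetric inequality on $f$ itself, but with a one-parameter family of cutoffs $\psi_\lambda+1+\lambda F$ and $\lambda\to 0$ to show that the \emph{limit} $f$ is constant in $v$ on $B_3$---i.e.\ an indicator function in $(t,x)$. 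Only on this limit does the transport argument become tractable: since $\L_i f^+$ is now uniformly bounded, one has $[\partial_t+v\cdot\nabla_x](f-1-F)\leq C_1$ pointwise after mollification, and a geometric cone argument (Lemma~\ref{thm:cone}) then forces a definite measure of intermediate values, contradicting the construction. The compactness step is what allows the propagation-in-time argument to work; attempting it directly on $f$, as you do, does not.
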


As in other applications of De Giorgi's method, the idea of the proof is to produce a sequence of solutions to our PDE with smaller and smaller intermediate measure, show that they are compact and have a discontinuous limit, and then show that said limit function inherits enough regularity from the PDE to result in a contradiction.  

Our version of the proof is divided into four steps.  In the first step, we show that our sequence is uniformly differentiable in $v$.  We then use the averaging lemma to show that, in some very specific sense, our sequence is uniformly differentiable in $t$ and $x$.  In the second step, we combine the results of step one to obtain compactness in all three variables, thus producing our limit.  In the third step, we show that this limit function is regular in $v$.  The limit is constant in $v$ for $|v|$ small, and behaves like an indicator function depending only on $t$ and $x$.  In the fourth and final step, we show that certain $t$- and $x$-derivatives of our limit function are bounded, and that this contradicts what we know about the jump discontinuities in our limit.  

\begin{proof}
Assume that the theorem is false.  Then there must exist a sequence $f_i$ of solutions to our equation \eqref{eq:main} with operators $\L_i$ subject to \eqref{eq:kappa_bound} and source terms
\[ \norm{a_i}_{L^\sourceexp(\Qext \times \R^n)} \leq 1/i \]
such that 
\[ \abs{f_i(t,x,v)} \leq 1 + \psi_{1/i} \qquad \forall (t,x,v) \in \Qext \times \R^n \]
while
\begin{align*}
|\{f_i \leq 0\} \cap \Qearly \times B_2| &\geq \frac{|\Qearly|\cdot|B_2|}{2}, \\
|\{f_i \geq 1-\frac{1}{i} \} \cap \Qlate \times B_2| &\geq \delta_0, \\
|\{0 < f_i < 1-\frac{1}{i} \} \cap \Qint \times B_3| &\leq \frac{1}{i}.
\end{align*}
We wish to take a limit of these functions $f_i$.

\step{Regularity in $v$ and regularity in $t,x$}

Let $F:\R^n \to \R$ be a smooth radially-increasing function of $v$ which is identically $-1$ on $B_2$ and identically 0 outside of $B_3$.  Since $F \in \Ctest$, it is trivial to show that
\begin{equation}\label{LF_bounded} 
\norm{\L_i F}_\infty \leq C(n,s,\kappa). 
\end{equation}

\newcommand{\fip}{{f_i^+}}
\newcommand{\fim}{{f_i^-}}
To obtain compactness, we use a very blunt cutoff function $\bar{\psi}$ defined by
\begin{align*} 
\bar{\psi}(v) &:= \psi_\frac{1}{3}(v) + 1 + F(v), \\
\fip &:= \max\paren{f-\bar{\psi},0}, \\
\fim &:= \max\paren{\bar{\psi}-f,0}.
\end{align*}

Because $\psi_{1/3} \geq \psi_\theta$ for all $\theta < 1/3$ by Lemma~\ref{thm:psi_properties}, property \eqref{psi_increasing}, each $\fip$ for $i$ sufficiently large will be supported on $v \in B_3$.  In fact
\begin{equation}\label{f_leq_F} 
0 \leq \fip(t,x,v) \leq -F(v) \qquad \forall (t,x,v) \in \Qext \times \R^n. 
\end{equation}

Each $f_i$ is a solution to \eqref{eq:main}, so we can apply Lemma~\ref{thm:energy_inequality} on the regions $\Qext$ and $\Qint$ with cutoff $\bar{\psi}$.   From \eqref{LF_bounded} and Lemma~\ref{thm:psi_properties}, property \eqref{L_of_psi} we know that $\norm{\L_i\bar{\psi}}_\infty$ is bounded by a finite universal constant.   The right hand side of this energy inequality is then universally bounded by \eqref{f_leq_F} so
\begin{equation}\label{Bs_are_bounded}
\iint_\Qint B_i(\fip,\fip) \,dxdt - \iint_\Qint B_i(\fip,\fim) \,dxdt \leq C(n,s,\kappa). 
\end{equation}
In particular, by Lemma~\ref{thm:B_and_Hs},
\begin{equation}\label{v_regularity_of_fip} 
\iint_\Qint \int \abs{\Lambda^s \fip}^2 \,dvdxdt \leq C(n,s,\kappa). 
\end{equation}
Critically, the constant $C(n,s,\kappa)$ does not depend on $i$.  

Unfortunately the energy inequality does not give us regularity in the $t$ and $x$ variables. In order to obtain compactness, therefore, we must rely on an averaging lemma.  To that end, apply the transport operator to $\fip^2$ and obtain
\begin{align*} 
\kinet \fip^2 &= 2 \fip \kinet f_i
\\ &= 2 \fip \L_i f_i + 2 \fip a_i
\\ &= 2 \fip \L_i\paren{f_i - \bar{\psi}} + 2 \fip \L_i \bar{\psi} + 2\fip a_i.
\end{align*}

For any function $g$ and operator $\L$ satisfying \eqref{eq:kappa_bound}, and $g_+ := \max(g,0)$, it is true that, for any $t$, $x$ fixed,
\begin{align*}
2 g_+ \L g &= \int 2 [g_+(v) g(w) - g_+(v)^2] K(t,x,v,w) \,dw
\\ &= \int [g_+(w)^2 - g_+(v)^2] K \,dw + \int [2 g_+(v) g(w) - g_+(v)^2 -  g_+(w)^2] K \,dw
\\ &= \int [g_+(w)^2 - g_+(v)^2] K \,dw - \int [g_+(w) - g_+(v)]^2 K \,dw + \int 2 g_+(v) [g(w) - g_+(w)] K \,dw
\\ &= \L g_+^2 - \int [g_+(w) - g_+(v)]^2 K \,dw - 2 \int g_+(v)g_-(w) K \,dw.
\end{align*}

Thus
\[ \kinet \fip^2 = H := H_1 + H_2 + H_3 + H_4 \]
where
\begin{align*} 
H_1 &:= \L_i \paren{\fip^2},
\\ H_2 &:= - \int [\fip(w) - \fip(v)]^2 K(v,w) \,dw,
\\ H_3 &:= - 2 \int \fip(v)\fim(w) K(v,w) \,dw,
\\ H_4 &:=  2 \fip \L_i \bar{\psi} + 2 \fip a_i.
\end{align*}

We proceed to bound $H$, term by term, independent of $i$.  

We begin with an $H^s$ bound on $\fip^2$:
\begin{align}
\int \abs{\Lambda^s (\fip^2)}^2 \,dv &= \iint \frac{|\fip^2(w)-\fip^2(v)|^2}{|v-w|^{n+2s}}\,dwdv \nonumber
\\ &= \iint \bracket{\fip(w)+\fip(v)}^2 \frac{|\fip(w)-\fip(v)|^2}{|v-w|^{n+2s}}\,dwdv \nonumber
\\ &\leq 2^2 \norm{\fip}_{L^\infty}^2 \int \abs{\Lambda^s (\fip)}^2 \,dv. \label{fip_in_Hs}
\end{align} 
From this, the bounds \eqref{f_leq_F} and \eqref{v_regularity_of_fip}, and Lemma~\ref{thm:B_and_Hs}, we obtain
\begin{equation}\label{bound_on_H1}
\norm{\bessel^{-s/2} H_1}_{L^2(\Qint\times\R^n)} \leq C(n,s,\kappa). 
\end{equation}

The terms $H_2$ and $H_3$ are strictly negative, so their total variations as measures are simply the absolute values of their integrals.  Thus their norms in $\mathcal{M}(\Qint \times \R^n)$ are
\[ \abs{\iint_{\Qint} \int H_2 \,dvdxdt} = \iint_{\Qint} B_i(\fip,\fip) \,dxdt, \]
\[ \abs{\iint_{\Qint} \int H_3 \,dvdxdt} = -\iint_{\Qint} B_i(\fip,\fim) \,dxdt. \]
These are of course universally bounded by \eqref{Bs_are_bounded}.  

Recall that $\paren{1-\Laplace_{v}}^{-\paren{s + \frac{n}{2}}/2}$ can be represented as convolution with a Green's function $G_{s+n/2}(v)$ (see e.g. Stein \cite{s}).  The function $G_{s+n/2}$ decays exponentially as $|v|\to \infty$ and has a singularity like $\frac{1}{|v|^{\frac{n}{2} - s}}$ near zero.  Therefore $G_{s+n/2}$ is in $L^2$.  By Young's Inequality, convolution of a measure and an $L^2$ function is bounded by the product of their $\mathcal{M}$ and $L^2$ norms respectively, so
\begin{equation}\label{bound_on_H2} 
\norm{\paren{1-\Laplace_v}^{-\paren{s + \frac{n}{2}}/2}H_2}_{L^2(\Qint \times \R^n)} \leq C(n,s,\kappa), 
\end{equation}
\begin{equation}\label{bound_on_H3}
\norm{\paren{1-\Laplace_v}^{-\paren{s + \frac{n}{2}}/2}H_3}_{L^2(\Qint \times \R^n)} \leq C(n,s,\kappa). 
\end{equation}

Lastly, from \eqref{f_leq_F} and since $\sourceexp \geq 2$ we know
\begin{equation}\label{bound_on_H4}
\norm{H_4}_{L^2(\Qint \times \R^n} \leq C(n,s,\kappa). 
\end{equation}

Finally we are ready to apply Lemma~\ref{thm:avg_lemma} to $\fip^2$, which says for any $\eta \in \Ctest(\R^n)$ and any subset $\bar{\Omega}$ compactly contained in the interior of $\Qext$,
\[ \norm{\int \eta \fip^2 \,dv}_{H^\alpha(\bar{\Omega})} \leq C(\eta, \bar{\Omega}) \paren{\norm{\fip^2}_{L^2(\Qint \times \R^n)} + \norm{\bessel^{-\paren{s + \frac{n}{2}}/2} H}_{L^2(\Qint\times\R^n)}} \]
where
\[ \alpha = \paren{2\paren{s+\frac{n}{2}}}\n. \]

From \eqref{bound_on_H1}, \eqref{bound_on_H2}, \eqref{bound_on_H3}, and \eqref{bound_on_H4}, we can say that in fact
\begin{equation}\label{Halpha_bounded} 
\norm{\int \eta \fip^2 \,dv}_{H^\alpha(\bar{\Omega})} \leq C(n,s,\kappa,\eta,\bar{\Omega}). 
\end{equation}

\step{Producing a strong $L^2$ limit}

Since all the $\fip$ are bounded by \eqref{f_leq_F}, $\{\fip^2\}_i$ is a bounded subset of $L^2(\Qint \times \R^n)$.  By Banach-Alaoglu, there exists a function $f^+$ such that, along some subsequence, 
\[ \fip^2 \weakly {f^+}^2 \]
weakly in $L^2(\Qint \times \R^n)$.  

Our goal is to show that this limit converges also strongly in $L^2_\loc(\Qint; L^2(\R^n))$.  To that end, fix some compact subset $\bar{\Omega}$ of $\Qint$.  

Strong and weak limits, when both exist, must be equal, so with the bound \eqref{Halpha_bounded} we apply Rellich-Kondrachov to prove that
\[ \int \eta(v) \fip^2 \,dv \to \int \eta(v) {f^+}^2 \,dv \]
strongly in $L^2(\bar{\Omega})$, without passing to a further subsequence, for any $\eta \in \Ctest(\R^n)$.  

In particular, if we fix some $\eta$ such that $\eta_\eps(v) = \eps^{-n} \eta(v/\eps)$ is an approximation to the identity, then for $\eps > 0$ and $v \in \R^n$ fixed,
\[ \iint_{\bar{\Omega}} \bracket{\int \fip^2(w) \eta_\eps(v-w) \,dw - \int {f^+}(w)^2 \eta_\eps(v-w) \,dw}^2 \,dxdt \xrightarrow{i\to\infty} 0. \]
Note that this is pointwise (in $v$) convergence of convolutions.

Since the $\fip$ are all bounded by \eqref{f_leq_F}, and by weak convergence so is ${f^+}$, we can apply the Lebesgue dominated convergence theorem to conclude that not only do these convolutions converge pointwise in $v$, but they converge in integral as well.  That is,
\begin{equation}\label{convergence_of_convolutions}
\int \iint_{\bar{\Omega}} \bracket{\paren{\fip^2 \ast_v \eta_\eps}(v) - \paren{{f^+}^2 \ast_v \eta_\eps}(v)}^2 \,dxdtdv \to 0. 
\end{equation}

It is known (see Lemma~\ref{thm:convolution_estimate} in the appendix for a proof) that for any $g \in H^s(\R^n)$, 
\[ \norm{g - g\ast \eta_\eps}_{L^2(\R^n)} \leq C(n,s,\eta) \norm{g}_{H^s(\R^n)} \eps^s. \]

Therefore for our functions $\fip^2$, 
\[ \iint_{\bar{\Omega}} \int \paren{\fip^2 - \fip^s \ast_v \eta_\eps}^2 \,dvdxdt \leq C(n,s,\eta) \eps^{2s} \iint_\Qint \int |\Lambda^s \fip|^2 \,dvdxdt. \]

Remember that $\norm{\fip^2}_{L^2(\Qint;H^s(\R^n))}$ is bounded by \eqref{fip_in_Hs} and \eqref{v_regularity_of_fip}, and, since the $H^s$ norm is weakly lower-semi-continuous, $\norm{{f^+}^2}_{L^2(\Qint;H^s(\R^n))}$ will be bounded as well.  

Therefore we can bound
\begin{align*} 
\norm{\fip^2 - f_+^2}_2 &\leq \norm{\fip^2 - \eta_\eps \ast_v \fip^2}_2 + \norm{\eta_\eps \ast_v \fip^2 - \eta_\eps \ast_v {f^+}^2}_2 + \norm{{f^+}^2 - \eta_\eps \ast_v {f^+}^2}_2
\\ &\leq C \eps^s + \norm{\eta_\eps \ast_v \fip^2 - \eta_\eps \ast_v {f^+}^2}_2.
\end{align*}
By $\norm{\cdot}_2$ we mean $\norm{\cdot}_{L^2(\bar{\Omega}\times\R^n)}$.  For any $\delta > 0$, we take $\eps$ small enough that $C \eps^s \leq \delta/2$.  Then with $\eps$ fixed, we choose $i$ large enough that (by \eqref{convergence_of_convolutions}) $\norm{\eta_\eps \ast_v \fip^2 - \eta_\eps \ast_v {f^+}^2}_2 \leq \delta/2$.  This proves that $\norm{\fip^2 - f_+^2}_2$ goes to 0 as $i \to \infty$.  

Since this is true for any $\bar{\Omega}$ compactly contained in the interior of $\Qint$, we can say that $\fip^2 \to {f^+}^2$ in $L^2_\loc(\Qint;L^2(\R^n))$.  

In fact, since our domain is compact, this convergence happpens in $L^1_\loc(\Qint;L^2(\R^n))$ as well.  Since $\fip$ and $f_+$ are non-negative, and since $(x-y)^2 \leq \abs{x^2 - y^2}$ for any non-negative real numbers $x$ and $y$, we can say that
\[ \fip \to {f^+} \qquad \textrm{in } L^2_\loc(\Qint; L^2(\R^n)). \]

\step{The limit is constant in $v$}

We'll denote 
\[ f = f^+ + 1 + F. \]

Because $\fip \to f^+$ strongly in $L^2_\loc$, we know that
\begin{equation}\label{mass_of_limit}
\begin{aligned}
|\{f = 0\} \cap \Qearly \times B_2| &\geq \frac{|\Qearly|\cdot|B_2|}{2}, \\
|\{f = 1 \} \cap \Qlate \times B_2| &\geq \delta_0, \\
|\{1+F < f < 1 \} \cap \Qint \times B_3| &= 0. 
\end{aligned}
\end{equation}

\begin{remark}
If $s \geq 1/2$, we can use the fact that the $H^s_v$ norm of $f$ is known to be finite for almost every $t,x$ fixed and obtain \eqref{constant_in_v} immediately, making the remainder of Step 3 unnecessary.  It is only in the case $s < 1/2$ that this regularity in $v$ is insufficient to rule out jump discontinuities.  Therefore we follow the technique used in \cite{nio} and by Bass and Kassmann in \cite{bk} to exploit the energy inequality's cross term.  
\end{remark}

\newcommand{\flambda}{f_{i,\lambda}^+}
\newcommand{\flambdam}{f_{i,\lambda}^-}
For $0 < \lambda \ll 1$, define the functions
\begin{align*} 
\flambda &:= \paren{f_i - \psi_\lambda - 1 - \lambda F}_+, \\
\flambdam &:= \paren{f_i - \psi_\lambda - 1 - \lambda F}_-.
\end{align*}

From the the energy inequality of Lemma~\ref{thm:energy_inequality}, we see that for all $i$ the cross term is bounded
\begin{equation}\label{flambda_energy_inequality}
-\iint_\Qint B\paren{\flambda, \flambdam} \leq C(n,s,\kappa) \bracket{\iint_\Qext \int {\flambda}^2 + \sup_{v\in B_3} \L_i(\psi_\lambda + \lambda F) \iint_\Qext\int \flambda + \norm{a_i}_{\sourceexp} \norm{\flambda}_{\sourceexp^\ast}}. 
\end{equation}

For $v \in B_3$, Lemma~\ref{thm:psi_properties}, property \eqref{L_of_psi} says that $\L_i \psi_\lambda(v) \leq C_\psi \lambda^{3s/2}$.  Moreover by \eqref{LF_bounded}, $\abs{\L_i \lambda F(v)} \leq C\lambda$ for some universal constant $C$.  

For $\lambda$ fixed and $i$ sufficiently large, 
\[ f_i \leq 1+\psi_{1/i} \leq 1+\psi_\lambda \]
so
\[ 0 \leq \flambda \leq \lambda F. \]
Therefore, for $\lambda$ fixed and $i$ sufficiently large, the inequality \eqref{flambda_energy_inequality} yields
\[ \iint_\Qint -B\paren{\flambda, \flambdam} \leq C(n,s,\kappa) \bracket{\lambda^2 + (\lambda + \lambda^{3s/2})\lambda + (1/i)\lambda}. \]

The cross term in turn bounds the integral of $\flambda$ and $\flambdam$.  For any $t,x$ fixed
\begin{align*}
-B_i(\flambda, \flambdam) &= \iint K(v,w) \flambda(v) \flambdam(w) \,dwdv
\\ &\geq \frac{1}{\kappa} \iint_{|v-w| \leq 6} \frac{\flambda(v) \flambdam(w)}{|v-w|^{n+2s}} \,dwdv
\\ &\geq \frac{1}{\kappa} \iint_{|v|\leq 3, |w|\leq 3} \frac{\flambda(v) \flambdam(w)}{6^{n+2s}} \,dwdv
\\ &= C \int_{B_3} \flambda \,dv \int_{B_3} \flambdam \,dv.
\end{align*}

Since $f_i \to f$ strongly in $L^2_\loc(\Qint;L^2(\R^n))$, these upper- and lower-bounds on the cross term hold in the limit:
\begin{equation}\label{bound_on_good_term}
\iint_\Qint \bracket{ \int_{B_3} \paren{f-\psi_\lambda-1-\lambda F}_+ \,dv \int_{B_3} \paren{f-\psi_\lambda-1-\lambda F}_- \,dv} \,dxdt \leq C(n,s,\kappa) (\lambda^2 + \lambda^{1+3s/2}). 
\end{equation}

This bound on the limit $f$ is very strong, because by \eqref{mass_of_limit} we have either $f(t,x,v)=1$ or $f(t,x,v) = 1 + F(v)$ for almost all $(t,x,v) \in \Qint \times B_3$.  For such $(t,x,v)$, also $\psi_\lambda(v)=0$ and so
\[ f-\psi_\lambda - 1 - \lambda F = \bracket{-\lambda F} \indic{f=1	} + \bracket{(1-\lambda) F} \indic{f=1+F}. \]

The function $-\lambda F$ is non-negative, while $(1-\lambda)F$ is non-positive, so at any point $t,x \in \Qint$,
\begin{align*}
\int_{B_3} \paren{f-\psi_\lambda-1-\lambda F}_+ \,dv &= -\lambda \int F \indic{f=1} \,dv \\
\int_{B_3} \paren{f-\psi_\lambda-1-\lambda F}_- \,dv &= -(1-\lambda) \int F \indic{f=1+F} \,dv.
\end{align*} 

Plugging this into \eqref{bound_on_good_term} and moving all the $\lambda$ to one side, we obtain
\[ \iint_\Qint \int F \indic{f=1} \,dv \int F \indic{f=1+F} \,dv \,dxdt \leq C(n,s,\kappa) \frac{\lambda^2 + \lambda^{1+3s/2}}{\lambda(1-\lambda)}. \]
The left-hand side is independent of $\lambda$, and the right side tends to 0 as $\lambda \to 0$, so we conclude that the left-hand side is in fact 0.  In particular, this means that for almost every $t,x \in \Qint$, either
\begin{equation}\label{constant_in_v}
\abs{\{v: f(t,x,v) = 1\} \cap B_3 } = 0 \qquad \textrm{or} \qquad \abs{\{v: f(t,x,v) = 1+F\} \cap B_3 } = 0. 
\end{equation}

\step{The limit has bounded derivative, which is a contradiction}

What remains is to argue that $f$ increases from 0 to 1, without taking intermediate values along the way, despite having bounded derivative.  Moreover, it is not enough to bound the derivatives in any weak sense, because jump discontinuities can hide in sets of measure zero.  

Since $f$ is only defined up to an a.e.-equivalence class, we can assume without loss of generality that, for every (not a.e.) $t,x \in \Qint$, either $f(t,x,v) \equiv 1$ or $f(t,x,v) \equiv 1+F$.  

For each $i$, since $\bar{\psi}$ is constant in $t$ and $x$, it is true that
\[ \kinet\paren{f_i - \bar{\psi}} = \L_i \paren{f_i - \bar{\psi}} + \L_i\bar{\psi} + a_i. \]
Multiplying by $\indic{f_i \geq \bar{\psi}}$ and recalling the standard pointwise inequality for integral operators (c.f. \cite{cs}),
\[ \kinet \fip \leq \L_i \fip + \indic{f_i \geq \bar{\psi}} \L_i\bar{\psi} + \indic{f_i \geq \bar{\psi}} a_i. \]

By \eqref{LF_bounded} and Lemma~\ref{thm:psi_properties}, property \eqref{L_of_psi}, the term $\indic{f_i \geq \bar{\psi}} \L_i\bar{\psi}$ is less than a universal constant $C(n,s,\kappa)$, and of course the $L^\sourceexp$ norm of $\indic{f_i \geq \bar{\psi}} a_i$ is less than $1/i$ so this term will vanish in the limit.  Let $\phi \in \Ctest(\Qint \times \R^n)$ be a non-negative test function and consider
\[ -\chevron{\fip, \kinet\phi} \leq \chevron{\fip, \L_i \phi} + \chevron{C, \phi} + \frac{1}{i} \norm{\phi}_{\sourceexp^\ast}. \]

For $\phi \in \Ctest$ fixed, the functions $\L_i \phi$ will be uniformly bounded in $L^\infty$ and decay like $|v|^{-n-2s}$.  In particular they are uniformly bounded in $L^2(\Qint \times \R^n)$.  Therefore
\[ \chevron{\fip - f^+, \L_i \phi} \to 0 \]
so in little-o notation
\[ -\chevron{\fip, \kinet\phi} \leq \chevron{\L_i f^+, \phi} + \chevron{C, \phi} + o(1). \]



By \eqref{constant_in_v} and \eqref{LF_bounded},
\[ \L_i f^+ = - \indic{t,x:f\equiv 1} \L_i F \leq C(n,s,\kappa). \]
Thus for some universal constant $C_1 = C_1(n,s,\kappa)$ we have, in the sense of distributions,
\[ \kinet \paren{f - 1 - F} \leq C_1. \]


To make the remaining calculation rigorous, let $\eta_\eps(t,x)$ be an approximation to the identity and define 
\[ f_\eps = \eta_\eps \ast_{t,x} f. \]
These functions $f_\eps$ are smooth and $f_\eps \to f$ pointwise a.e. as $\eps \to 0$.  For $(t,x)\in \Qint$ fixed, $f_\eps$, like $f$, is constant over all $v \in B_2$.  Because the transport operator commutes with convolution in $t$ and $x$,
\[ \kinet f_\eps = \eta_\eps \ast_{t,x} \kinet f \leq C_1. \]
This inequality is true not only in the sense of distributions but also pointwise because the functions are smooth.  

Define two sets
\begin{align*} 
M_1 &= \{t,x \in \Qlate: f(t,x,v) = 1 \}, \\
M_0 &= \{t,x \in \Qearly: f(t,x,v) = 1+F(v) \}. \\
\end{align*}
By \eqref{mass_of_limit} we know that $|M_0| \geq \frac{|\Qearly|}{2}$ and $|M_1| \geq \frac{\delta_0}{|B_2|}$.  By Egorov's theorem, for $\eps$ sufficiently small,
\begin{align}
|M_1^\eps| &:= \abs{\{t,x \in \Qlate: f_\eps(t,x,v) > 0.9 \, \forall v \in B_2\}} \geq \frac{\delta_0}{2|B_2|}, \label{size_of_M1eps} \\
|M_0^\eps| &:= \abs{\{t,x \in \Qearly: f_\eps(t,x,v) < 0.1 \, \forall v \in B_2\}} \geq \frac{|\Qearly|}{4}. \nonumber
\end{align}
Fixing $\eps$, choose a point $(t_0,x_0) \in M_0^\eps$.


For any $(t_1,x_1) \in M_1^\eps$, we can define the velocity $\bar{v} := \frac{x_1-x_0}{t_1-t_0}$ and see that $|\bar{v}| \leq 2$.  Then the function
\[ \tau \mapsto f_\eps\big((1-\tau)t_0 + \tau t_1, (1-\tau)x_0 + \tau x_1, \bar{v}\big) \]
is equal to 0 at $\tau = 0$ and equal to 1 at $\tau = 1$, and its derivative is less than $(t_1-t_0) C_1$.  Therefore
\begin{equation}\label{line_segments_have_large_measure}
\mathcal{H}^1\paren{\textrm{segment}\bracket{(t_0,x_0),(t_1,x_1)} \cap \{t,x: 0.1 < f(t,x,v) < 0.9 \, \forall v\in B_2\}} \geq \frac{.8 \sqrt{1 + |\bar{v}|^2}}{C_1} \geq \frac{2}{C_1}. 
\end{equation}

The facts \eqref{size_of_M1eps} and \eqref{line_segments_have_large_measure} tell us, by the elementary geometric argument of Lemma~\ref{thm:cone}, that the cone with vertex $(t_0,x_0)$ and base $M_1^\eps$ must intersect $\{t,x: 0.1 < f(t,x,v) < 0.9 \, \forall v\in B_2\}$ on a set with measure $(\delta_0/2|B_2|)(2/C_1)^2/80$.  

In particular,
\[ \abs{\{0.1<f_\eps<0.9\} \cap \Qint \times B_2} \geq \frac{2\delta_0}{80 C_1^2|B_2|} > 0. \]

This bound holds for all $\eps$ sufficiently small, but we know from \eqref{mass_of_limit} that it is not true for $f$.  By Egorov's theorem, this is a contradiction.  

Therefore our sequence $f_i$ must not exist, and the proposition must be true.  

\end{proof}


\section{H\"{o}lder Continuity}\label{sec:Holder}

In this section, we explain how Proposition~\ref{thm:DG1} and Proposition~\ref{thm:DG2} together lead to H\"{o}lder regularity of our solution.  We begin by showing that the PDE \eqref{eq:main} is scaling invariant.  We then show, in Lemma~\ref{thm:oscillation}, how to combine Proposition~\ref{thm:DG1} and Proposition~\ref{thm:DG2} to create a sort of Harnack's inequality.  The ideas here are not new, in particular we follow \cite{nio} very closely.  

\begin{lemma}[Scaling]\label{thm:scaling}
If $f$ solves \eqref{eq:main} on some region $Q\times \R^n \subseteq \Rall$, then for any constant $\eps < 1$, 
\[ \bar{f}(t,x,v) := f(\eps^{2s} t, \eps^{1+2s} x, \eps v) \]
will solve 
\[ \del_t \bar{f} + v \cdot \grad_x \bar{f} = \int [\bar{f}(w) - \bar{f}(v)] \bar{K}(t,x,v,w) \,dw + \bar{a} \]
on the appropriate region $Q_\eps\times\R^n$ with $\bar{K}$ symmetric and satisfying \eqref{eq:kappa_bound}, and with 
\[ \norm{\bar{a}}_{L^\sourceexp(Q_\eps\times\R^n)} \leq \eps^{2s\paren{1 - \frac{n+1+n/s}{\sourceexp}}} \norm{a}_{L^\sourceexp(Q\times\R^n)}. \]
\end{lemma}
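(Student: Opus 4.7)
The proof is a direct change-of-variables computation; the entire content is the observation that the triple $(t,x,v) \mapsto (\eps^{2s}t, \eps^{1+2s}x, \eps v)$ is the natural parabolic scaling that makes the transport operator $\kinet$ and the fractional diffusion of order $2s$ in $v$ homogeneous at the same rate. There is no real obstacle; the only thing to verify carefully is that the new kernel inherits \eqref{eq:kappa_bound} and that the source term picks up exactly the claimed factor.

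First, I would compute $\kinet \bar{f}$ by the chain rule. Writing $(T,X,V) = (\eps^{2s}t, \eps^{1+2s}x, \eps v)$, one finds $\del_t \bar{f}(t,x,v) = \eps^{2s}(\del_T f)(T,X,V)$ and $v\cdot\grad_x \bar{f}(t,x,v) = \eps^{1+2s}v\cdot(\grad_X f)(T,X,V) = \eps^{2s}V\cdot(\grad_X f)(T,X,V)$, so $\kinet \bar{f}(t,x,v) = \eps^{2s}(\kinet f)(T,X,V)$.

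Next, in the integral $\L f(T,X,V) = \int K(T,X,V,W)[f(T,X,W) - f(T,X,V)]\,dW$, substitute $W = \eps w$ so that $f(T,X,W) = \bar{f}(t,x,w)$ and $dW = \eps^n dw$. Defining
\[ \bar{K}(t,x,v,w) := \eps^{n+2s} K(\eps^{2s}t, \eps^{1+2s}x, \eps v, \eps w), \]
the factor $\eps^{2s}$ from the transport side combines with the Jacobian to give exactly $\int \bar{K}(t,x,v,w)[\bar{f}(w) - \bar{f}(v)]\,dw$. Setting $\bar{a}(t,x,v) := \eps^{2s}a(T,X,V)$ completes the equation. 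Both symmetries in \eqref{eq:kappa_bound} are preserved because they are preserved by $K$; for the two-sided bound, the upper bound $\bar{K} \leq \kappa|v-w|^{-(n+2s)}$ follows from $|\eps v - \eps w| = \eps|v-w|$, and the lower bound $\bar{K} \geq \kappa^{-1}|v-w|^{-(n+2s)}$ when $|v-w|\leq 6$ uses the crucial fact that $\eps < 1$, so that $|\eps v - \eps w| \leq 6$ as well and the indicator in \eqref{eq:kappa_bound} is in force at the rescaled arguments.

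Finally, I would compute the source norm by change of variables with Jacobian $|dt\,dx\,dv| = \eps^{-(2s + n(1+2s) + n)}|dT\,dX\,dV| = \eps^{-2(s + n + ns)}|dT\,dX\,dV|$:
\[ \norm{\bar{a}}_{L^\sourceexp(Q_\eps\times\R^n)}^\sourceexp = \eps^{2s\sourceexp - 2(s + n + ns)}\,\norm{a}_{L^\sourceexp(Q\times\R^n)}^\sourceexp. \]
Taking the $\sourceexp$-th root and factoring $2s$ out of the exponent gives $2s\bigl(1 - \tfrac{s + n + ns}{s\sourceexp}\bigr) = 2s\bigl(1 - \tfrac{n+1+n/s}{\sourceexp}\bigr)$, matching the claim. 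In particular the exponent on $\eps$ is positive precisely above the critical scaling threshold $\sourceexp > n+1+n/s$, which is the whole point of this lemma for the subsequent Hölder continuity argument.
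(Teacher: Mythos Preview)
Your proof is correct and follows essentially the same approach as the paper: a direct chain-rule computation for the transport operator, the same definition of $\bar{K}$ and $\bar{a}$, the same verification of \eqref{eq:kappa_bound} (including the observation that $\eps<1$ is what preserves the lower bound), and the same change-of-variables calculation for the $L^\sourceexp$ norm. The only differences are cosmetic in presentation.
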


\begin{proof}
Denote
\[ p = (t,x,v), \qquad \bar{p} = (\eps^{2s} t, \eps^{1+2s} x, \eps v). \]

Evaluate the equality \eqref{eq:main} at the point $\bar{p}$, so that
\begin{equation}\label{PDE_at_barp}
(\del_t f)(\bar{p}) + \eps v \cdot (\grad_x f)(\bar{p}) = (\L f)(\bar{p}) + a(\bar{p}). 
\end{equation}

For our modified function $\bar{f}$ evaluated at $p$,
\begin{align} 
\del_t \bar{f}(p) &= \eps^{2s} (\del_t f)(\bar{p}), \label{scaled_delt} \\
\grad_x \bar{f}(p) &= \eps^{1+2s} (\grad_x f)(\bar{p}). \label{scaled_gradx}
\end{align}

Define
\[ \bar{K}(t,x,v,w) := \eps^{n+2s} K(\eps^{2s} t, \eps^{1+2s} x, \eps v, \eps w). \]
It's clear that $\bar{K}$ is still symmetric.  Since
\[ \bar{K}(t,x,v,w) \geq \eps^{n+2s} \indic{\eps |v-w| \leq 6} \frac{1}{\kappa} (\eps |v-w|)^{-(n+2s)} \geq \indic{|v-w| \leq 6} \frac{1}{\kappa} |v-w|^{-(n+2s)} \]
and 
\[ \bar{K}(t,x,v,w) \leq \eps^{n+2s} \kappa (\eps |v-w|)^{-(n+2s)} = \kappa |v-w|^{-(n+2s)}, \]
$\bar{K}$ satisfies the bound \eqref{eq:kappa_bound}.  

For this $\bar{K}$, 
\begin{align} 
\int [\bar{f}(w) - \bar{f}(v)] \bar{K}(p,w) \,dw &= \eps^{n+2s} \int [f(\eps w) - f(\eps v)] K(\bar{p}, \eps w) \,dw \nonumber
\\ &= \eps^{n+2s} \frac{1}{\eps^n} \int [f(\eps w) - f(\eps v)] K(\bar{p}, \eps w) \,d(\eps w) \nonumber
\\ &= \eps^{2s} (\L f)(\bar{p}). \label{scaled_L}
\end{align}

Define
\begin{equation}\label{scaled_sigma}
\bar{a}(t,x,v) := \eps^{2s} a(\eps^{2s} t, \eps^{1+2s} x, \eps v). 
\end{equation}
Then the $L^\sourceexp$ norm of $\bar{a}$ is
\[ \norm{\bar{a}}_\sourceexp = \eps^{2s} \eps^{-\frac{2s + n(1+2s) + n}{\sourceexp}} \paren{\iiint a(\eps^{2s} t, \eps^{1+2s} x, \eps v)^\sourceexp \,d(\eps^{2s} t)\, d(\eps^{1+2s} x) \, d(\eps v)}^{1/\sourceexp}. \]

Plugging \eqref{scaled_delt}, \eqref{scaled_gradx}, \eqref{scaled_L}, and \eqref{scaled_sigma} into \eqref{PDE_at_barp} yields
\[ \eps^{-2s} \del_t \bar{f}(p) + \eps \eps^{-1-2s} v\cdot\grad_x \bar{f}(p) = \eps^{-2s} \int [\bar{f}(w)-\bar{f}(v)] \bar{K}(p) \,dw + \eps^{-2s} \bar{a}(p). \]
Multiply both sides by $\eps^{2s}$ to obtain our desired result.  
\end{proof}

\begin{remark}
In addition to scaling, we can also translate solutions of \eqref{eq:main}.  If $f$ is a solution and $(t_0,x_0,v_0)$ is a point in its domain, then
\[ \bar{f}(t,x,v) := f(t_0 + t, x_0 + x + v_0 t, v_0 + v) \]
will be a solution to \eqref{eq:main} with similarly adjusted source term and kernel.  This translation invariance is necessary for the proof of H\"{o}lder continuity, though we omit any further detail.  
\end{remark}

The following lemma should be thought of as a Harnack inequality, except that it keeps track also of the growth in $v$.  

In the sequel, $\theta_0$ and $\gamma_0$ refer to the constant defined in the statement of Proposition~\ref{thm:DG2}, and $\delta_0$ refers to the constant defined in Proposition~\ref{thm:DG1} which is used again in the statement of Proposition~\ref{thm:DG2}.

\begin{lemma}[Oscillation Lemma]\label{thm:oscillation}
There exists a universal constant $0 < \lambda < 1$ such that the following is true: 

If $f \in L^2(\Qext; H^s(\R^n))$ is a weak solution to \eqref{eq:main} subject to \eqref{eq:kappa_bound} with source term
\[ \norm{a}_{L^\sourceexp(\Qext)} \leq \lambda \theta_0 \]
and satisfying
\begin{equation}\label{f_leq_1+psi}
|f(t,x,v)| \leq 1 + \lambda \psi_{\theta_0}(v) 
\end{equation}
for all $t,x \in \Qext$, $v \in \R^n$, then
\[ \bracket{\sup_{[-1,0]\times B_1 \times B_1} f} - \bracket{\inf_{[-1,0]\times B_1 \times B_1} f} \leq 2 - \lambda. \]

Moreover, at least one of the two functions
\[ \bar{f}_1(t,x,v) = \paren{1+\frac{\lambda}{2}}\bracket{f(\lambda^{2s} t, \lambda^{1+2s} x, \lambda v) + \lambda/2} \]
or
\[ \bar{f}_2(t,x,v) = \paren{1+\frac{\lambda}{2}}\bracket{f(\lambda^{2s} t, \lambda^{1+2s} x, \lambda v) - \lambda/2} \]
will also solve \eqref{eq:main} subject to \eqref{eq:kappa_bound} in the weak sense with source term smaller than $\lambda \theta_0$ and satisfy 
\[ |\bar{f}_i(t,x,v)| \leq 1 + \lambda \psi_{\theta_0}(v) \]
for all $t,x \in \Qext$, $v \in \R^n$.  

\end{lemma}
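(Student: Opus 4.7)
The plan is to combine Propositions~\ref{thm:DG1} and \ref{thm:DG2} via a De Giorgi iteration to obtain the oscillation bound, and then to verify that the specified rescaling preserves the lemma's hypotheses so the argument can be iterated in Section~\ref{sec:Holder}.

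Since $\theta_0 < 1/3$, Lemma~\ref{thm:psi_properties}(ii) gives $\psi_{\theta_0} \equiv 0$ on $B_3$, so the hypothesis \eqref{f_leq_1+psi} yields $|f| \leq 1$ on $\Qext \times B_3$. A dichotomy on $\Qearly \times B_2$ shows that either $|\{f \leq 0\}|$ or $|\{f \geq 0\}|$ is at least $\tfrac{1}{2}|\Qearly|\cdot|B_2|$. I will focus on the first case, in which I aim to prove $\sup_{[-1,0]\times B_1\times B_1} f \leq 1 - \lambda$; the second case is symmetric, yielding the corresponding inf bound and leading to the use of $\bar{f}_2$ rather than $\bar{f}_1$ in the rescaling. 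Combined with $|f| \leq 1$ on $B_1$ in velocity, this produces the oscillation estimate.

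Suppose for contradiction $\sup f > 1 - \lambda$ in the small cylinder. I will construct a finite sequence of affine-plus-velocity-scaled rescalings $\tilde{f}_k$ of $f$ for $k = 1, \ldots, N_0$, with $N_0 := \lceil |\Qint|\cdot|B_3|/\gamma_0\rceil$, each designed so that the hypotheses of Proposition~\ref{thm:DG2} hold: (i) the velocity rescaling keeps $|\tilde{f}_k| \leq 1 + \psi_{\theta_0}$, using Lemma~\ref{thm:psi_properties}(v) to absorb the amplification factor exactly as in the second part of this lemma; (ii) the ``below $0$'' mass on $\Qearly \times B_2$ is inherited from the dichotomy; (iii) the ``above $1-\theta_0$'' threshold of $\tilde{f}_k$ corresponds, under the rescaling, to $f \geq 1 - 2^{-k}\lambda_0$ for a fixed $\lambda_0 \in (\lambda, 1)$. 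At each step where all three DG2 hypotheses hold, the conclusion produces mass $\geq \gamma_0$ in the intermediate set, corresponding to a pairwise disjoint dyadic shell $\{1 - 2^{-k+1}\lambda_0 < f \leq 1 - 2^{-k}\lambda_0\}$ in $\Qint \times B_3$. Summing these disjoint shell masses would exceed $|\Qint|\cdot|B_3|$ after $N_0$ steps, so the ``above $1-\theta_0$'' hypothesis must fail at some $k_0 \leq N_0$; at that point Proposition~\ref{thm:DG1} applies to $\tilde{f}_{k_0}$, after an additive shift to align its cutoff with $\psi^1$ (permissible by Lemma~\ref{thm:psi_properties}(iii)), and forces a pointwise bound translating back to $\sup f \leq 1 - 2^{-k_0-1}\lambda_0$ in the small cylinder. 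Choosing $\lambda < 2^{-N_0-1}\lambda_0$ makes this strictly less than $1 - \lambda$, contradicting the assumption.

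For the second claim, set $\bar{f}_1(t,x,v) = (1+\lambda/2)[f(\lambda^{2s}t, \lambda^{1+2s}x, \lambda v) + \lambda/2]$. Lemma~\ref{thm:scaling} together with the fact that $\kinet$ and $\L$ both annihilate additive constants give that $\bar{f}_1$ is a weak solution of \eqref{eq:main} with a kernel satisfying \eqref{eq:kappa_bound} and source-term norm bounded by $(1+\lambda/2)\lambda^{2s(1 - (n+1+n/s)/\sourceexp)}\lambda \theta_0 \leq \lambda \theta_0$ for $\lambda$ small, since $\sourceexp > \sourceexp_0 > n+1+n/s$. The growth bound $|\bar{f}_1(v)| \leq 1 + \lambda\psi_{\theta_0}(v)$ splits by range. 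For $|v| \leq \lambda^{-1}$, the argument $\lambda v$ lies in $B_1$, and the sup bound established above gives $f \in [-1, 1-\lambda]$, whence $\bar{f}_1 \in [-1 + \lambda^2/4, 1 - \lambda^2/4] \subset [-1, 1]$. For $|v| > \lambda^{-1}$, the hypothesis $|f(\lambda v)| \leq 1 + \lambda\psi_{\theta_0}(\lambda v)$ reduces the desired inequality to $(1+\lambda/2)\psi_{\theta_0}(\lambda v) + 1 + \lambda/4 \leq \psi_{\theta_0}(v)$, which follows from Lemma~\ref{thm:psi_properties}(v) by choosing $\lambda < \eps_0(s, \theta_0)$. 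The symmetric case uses $\bar{f}_2$ in the same way.

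The main obstacle is the growth envelope, both inside the iteration in the first part and in the verification in the second. Property (v) of Lemma~\ref{thm:psi_properties} is precisely calibrated so that the amplification $(1+\lambda/2)$ introduced by each velocity rescaling is absorbed by the gap $\psi_{\theta_0}(v) - 2\psi_{\theta_0}(\lambda v) \geq 2$; without this, the envelope would inflate at each iteration and the subsequent H\"{o}lder scheme could not close. A single universal $\lambda$ is chosen small enough to satisfy simultaneously the source-term scaling, iteration closure $\lambda < 2^{-N_0-1}\lambda_0$, and envelope preservation; all these constraints depend only on universal quantities, so such a $\lambda$ exists.
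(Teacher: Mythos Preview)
Your overall architecture---iterate DG2 until its ``mass-above'' hypothesis fails, then apply DG1, and separately verify the rescaled function $\bar f_i$ still satisfies the hypotheses---matches the paper. The verification of the second claim (the growth bound for $\bar f_1$ via the case split $|v|\lessgtr \lambda^{-1}$ and property~\eqref{psi_scaled_inequality}) is essentially identical to the paper's argument and is correct.

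The iteration for the oscillation bound, however, has a genuine gap. You build $\tilde f_k$ by an affine map sending $\{f=1-2^{-k+1}\lambda_0\}\mapsto\{\tilde f_k=0\}$ and $\{f=1-2^{-k}\lambda_0\}\mapsto\{\tilde f_k=1-\theta_0\}$. A short computation then gives $\tilde f_k = A_k\bigl(f-(1-2^{-k+1}\lambda_0)\bigr)$ with $A_k\cdot 2^{-k}\lambda_0=1-\theta_0$, so on $B_3$ (where $|f|\leq 1$) one finds $\tilde f_k$ can reach $A_k\cdot 2^{-k+1}\lambda_0 = 2(1-\theta_0)>1$ since $\theta_0<1/3$. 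This violates the hypothesis $|\tilde f_k|\leq 1+\psi_{\theta_0}$ of Proposition~\ref{thm:DG2}, so DG2 cannot be invoked. Your proposed fix---a velocity rescaling absorbed via property~\eqref{psi_scaled_inequality}---does not help: by Lemma~\ref{thm:scaling} any rescaling in $v$ forces a simultaneous rescaling of $t$ and $x$, which changes the cylinders $\Qearly,\Qint,\Qlate$ and destroys the inherited ``mass-below'' condition; and on $B_3$, where $\psi_{\theta_0}\equiv 0$, property~\eqref{psi_scaled_inequality} gives nothing to absorb the overshoot $2(1-\theta_0)-1$.

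The paper avoids this by a different affine map: $f_k=(f-1)/\theta_0^k+1$, which \emph{fixes the value $1$}. Then $f\leq 1$ on $B_3$ gives $f_k\leq 1$ automatically, and the $\psi_{\theta_0}$ tail is controlled simply by taking $\lambda\leq\theta_0^{k_0+1}/2$ so that $\lambda/\theta_0^k\leq 1$ for all $k\leq k_0+1$---no velocity rescaling is needed during the iteration. The natural amplification factor is $\theta_0$, not $2$, precisely because DG2's threshold is $1-\theta_0$. A smaller point: to pass from $f_{k_0+1}\leq 1+\psi_{\theta_0}$ to the hypothesis $f_{k_0+1}\leq\psi^1$ for $|v|\geq 2$ needed in DG1, one uses property~\eqref{psi_geq_one_plus_psi}, not~\eqref{psi_increasing}.
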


\begin{proof}
Choose $k_0 \in \N$ such that
\[ \gamma_0 k_0 > |\Qint \times B_3|. \]
Take $\lambda$ small enough that
\begin{equation}\label{lambda_assumptions}
\lambda \leq \frac{\theta_0^{k_0+1}}{2}, \qquad 3\lambda^{1+2s} < 1, \qquad 6\lambda^{2s} < 1, \qquad \lambda < \eps_0, \qquad \textrm{ and } \paren{1+\frac{\lambda}{2}}\lambda^{2s\paren{1 - \frac{n+1+n/s}{\sourceexp}}} \leq 1
\end{equation}
where $\eps_0 = \eps_0(s,\theta_0)$ is defined in Lemma~\ref{thm:psi_properties} property \eqref{psi_scaled_inequality}.  

Assume without loss of generality that 
\begin{equation} \label{fk_can_haz_measure_below}
|\{f\leq 0\} \cap \Qearly \times B_2| \geq |\Qearly|\cdot |B_2|/2. 
\end{equation}  
If this were not true, then we could simply discuss $-f$ instead.  This proposition holds for $f$ if and only if it holds for $-f$.  

With this assumption, we will assert that the proposition's result is true for
\[ \bar{f}(t,x,v) = \paren{1+\frac{\lambda}{2}}\bracket{f(\lambda^{2s} t, \lambda^{1+2s} x, \lambda v) + \lambda/2}. \]
It is clear by Lemma~\ref{thm:scaling} and linearity of Equation~\eqref{eq:main} that $\bar{f}$ will solve \eqref{eq:main} subject to \eqref{eq:kappa_bound} with source term $\bar{a}$ smaller than $\lambda \theta_0$ by \eqref{lambda_assumptions}.  We must show that $\bar{f}$ is also bounded as desired.  

Consider the sequence of functions
\begin{align*}
f_0 &= f \\
f_k &= \frac{f_{k-1}-1}{{\theta_0}} + 1 = \frac{f-1}{{\theta_0}^k} + 1.
\end{align*}
Since equation~\eqref{eq:main} is linear, all $f_k$ will also be solutions with source terms $\frac{1}{\theta_0^k} a$.  

For each $0 \leq k \leq k_0+1$ and any $(t,x,v) \in \Qext\times\R^n$, 
\[ |a(t,x,v)| \leq \frac{\lambda\theta_0}{\theta_0^k} \leq \theta_0 \]
by the assumption \eqref{lambda_assumptions}, and by \eqref{f_leq_1+psi} and \eqref{lambda_assumptions},
\begin{equation}\label{fk_has_bounded_growth} 
f_k = \frac{f-1}{{\theta_0}^k} + 1 \leq \frac{\lambda}{{\theta_0}^k} \psi_{\theta_0} + 1 \leq \psi_{\theta_0} + 1. 
\end{equation}

We wish to show that $f_{k_0}$ satisfies
\begin{equation}\label{fk_can_haz_DG1}
|\{f_{k_0} \geq 1-{\theta_0}\}\cap \Qlate \times B_2| \leq \delta_0. 
\end{equation}
Therefore assume, for contradiction, that \eqref{fk_can_haz_DG1} does not hold.  Then by construction, each $f_k$ will satisfy \eqref{DG2_mass_late} for $0 < k \leq k_0$.  Moreover, all $f_k$ will satisfy \eqref{DG2_mass_early} since $f_0$ does by \eqref{fk_can_haz_measure_below}.  Therefore we can apply Proposition~\ref{thm:DG2} and conclude that each $f_k$ for $k$ from 0 to $k_0$ must satisfy \eqref{DG2_mass_between}.  That means that the set
\[ S_k := |\{f_k \leq 0\} \cap \Qint \times B_3| \]
must increase in measure by at least $\gamma_0$ with each increment of $k$.  By choice of $k_0$, this would be a contradiction.  We conclude that \eqref{fk_can_haz_DG1} holds.  

Due to \eqref{fk_has_bounded_growth} and Lemma~\ref{thm:psi_properties}, property \eqref{psi_geq_one_plus_psi}, we say that for all $t,x \in \Qlate$ and all $|v| \geq 2$
\[ f_{k_0+1}(t,x,v) \leq 1 + \psi_{\theta_0}(v) \leq \psi^1(v). \]
By \eqref{fk_has_bounded_growth}, $f_{k_0+1}(t,x,v) \leq 1$ for all $(t,x,v) \in [-2,0]\times B_2 \times B_2$, so we can say by \eqref{fk_can_haz_DG1} that
\[ \iiint_{\Qlate \times B_2} \max(f_{k_0+1}-\psi^1,0)^2 \,dvdxdt \leq \delta_0. \]

This is sufficent to apply Proposition~\ref{thm:DG1} to $f_{k_0+1}$ and conclude that $ f_{k_0+1} \leq 1/2$ on $[-1,0]\times B_1 \times B_1$.  Thus for the original $f$,
\begin{equation}\label{Harnack_partial_result} 
-1 \leq f \leq 1-\frac{1}{2} {\theta_0}^{k_0+1} \leq 1-\lambda \qquad \forall (t,x,v)\in [-1,0]\times B_1 \times B_1. 
\end{equation}
This proves the lemma's first claim.  

We now know from \eqref{Harnack_partial_result}, the definition of $\bar{f}$, and \eqref{lambda_assumptions} that for all $t,x \in \Qext$ and $|v| \leq \lambda\n$
\begin{align*}
\bar{f}(t,x,v) &\leq \paren{1+\frac{\lambda}{2}} \bracket{ 1 - \lambda + \lambda/2} \leq 1, \\
\bar{f}(t,x,v) &\geq \paren{1+\frac{\lambda}{2}} \bracket{-1 + \lambda/2} \geq -1.
\end{align*}

For $t,x \in \Qext$ and $|v| \geq \lambda\n$, since $\lambda < \eps_0$, we know by Lemma~\ref{thm:psi_properties}, property \eqref{psi_scaled_inequality} that
\[ 2 \psi_{\theta_0}(\lambda v) + 2 \leq \psi_{\theta_0}(v). \]
Therefore
\begin{align*} 
\abs{\bar{f}(t,x,v)} &\leq \paren{1 + \frac{\lambda}{2}} \bracket{1 + \lambda \psi_{\theta_0}(\lambda v) + \lambda/2} 
\\ &\leq \paren{1 + \frac{\lambda}{2}} \bracket{1 + \frac{\lambda}{2} \psi_{\theta_0}(v) - \lambda + \lambda/2} 
\\ &\leq 1 + \lambda \psi_{\theta_0}(v).
\end{align*}

This completes the proof.  

\end{proof}

Theorem~\ref{thm:main} is proven by iteratively applying this Lemma~\ref{thm:oscillation} to an appropriately scaled function.  


\appendix
\section{Some Technical Lemmas}\label{sec:appendix}

We prove here the averaging lemma used throughout this paper.  
This lemma is an immediate corollary of \cite{bezard} Theorem 6.  It is merely a localization of that result.  
\begin{lemma}[Averaging Lemma]\label{thm:avg_lemma}
Let $\Omega$ be an open subset of space-time $\R \times \R^n$, and $\bar{\Omega}$ a compact subset of $\Omega$.  

For any smooth function $\eta \in \Ctest(\R^n)$ and any $m \in \R^+$, there exists a constant $C = C(n,m,\eta, \bar{\Omega}, \Omega)$ and a constant
\[ \alpha = \frac{1}{2(1+m)} \]
such that the following is true:

For any two functions $f$ and $g$ in $L^2(\Omega \times \R^n)$ satisfying
\[ \kinet f = g, \]
it is true that
\[ \norm{\int \eta f \,dv}_{H^\alpha(\bar{\Omega})} \leq C \paren{\norm{f}_{L^2(\Omega \times \R^n)} + \norm{\bessel^{-m/2} g}_{L^2(\Omega\times\R^n)}}. \]
%
\end{lemma}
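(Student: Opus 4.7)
The plan is to reduce the local statement to the corresponding global estimate of Bezard (Theorem 6 of \cite{bezard}) by multiplying $f$ by appropriate smooth cutoffs in $(t,x)$ and $v$ and controlling the commutator with the transport operator. Concretely, fix a cutoff $\chi \in \Ctest(\Omega)$ with $\chi \equiv 1$ on a neighborhood of $\bar{\Omega}$, and a cutoff $\phi \in \Ctest(\R^n)$ with $\phi \equiv 1$ on $\supp \eta$. Define the localized function
\[ \tilde{f}(t,x,v) := \chi(t,x)\phi(v) f(t,x,v), \]
which lies in $L^2(\Rall)$ with compact support and norm bounded by $\norm{f}_{L^2(\Omega\times\R^n)}$.

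Next I would compute the transport derivative of $\tilde{f}$. Because $\phi$ depends only on $v$ it commutes with $\kinet$, and $\chi$ depends only on $(t,x)$ so $\L$ ignores it, giving
\[ \kinet \tilde{f} = \chi \phi \,\kinet f + f\phi\,\kinet \chi = \chi \phi\, g + f \phi (\partial_t \chi + v\cdot \grad_x\chi) =: \tilde{g}. \]
The first piece satisfies $\norm{\bessel^{-m/2}(\chi\phi g)}_{L^2} \leq C\norm{\bessel^{-m/2} g}_{L^2(\Omega\times\R^n)}$, since multiplication by $\chi(t,x)\phi(v)$, with $\phi \in \Ctest(\R^n)$ fixed, is a bounded operator on $L^2_{t,x} H^{-m}_v$ (use the Fourier characterization of $H^{-m}_v$ and the fact that multiplication by Schwartz functions is bounded on every Sobolev space). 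The second piece, $f\phi\kinet\chi$, is the commutator term: $\phi$ has compact support in $v$, so the factor $v\cdot\grad_x\chi$ is bounded, hence $f\phi\kinet\chi$ lies in $L^2(\Rall)$ with norm bounded by $C\norm{f}_{L^2(\Omega\times\R^n)}$, and consequently $\bessel^{-m/2}(f\phi\kinet\chi)$ is in $L^2$ with the same bound (since $\bessel^{-m/2}$ is a contraction on $L^2$ for $m \geq 0$).

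With $\tilde{f}$ and $\tilde{g}$ compactly supported and in the correct function spaces on $\Rall$, I would apply Bezard's global averaging lemma (\cite{bezard}, Theorem 6) to the identity $\kinet \tilde{f} = \tilde{g}$ with the weight $\eta$, yielding
\[ \norm{\int \eta\, \tilde{f}\,dv}_{H^\alpha(\R\times\R^n)} \leq C\paren{ \norm{\tilde{f}}_{L^2(\Rall)} + \norm{\bessel^{-m/2}\tilde{g}}_{L^2(\Rall)}}, \]
where $\alpha = 1/(2(1+m))$. On $\bar{\Omega}$ one has $\chi \equiv 1$ and $\eta\phi = \eta$, so $\int \eta\, \tilde{f}\,dv = \int \eta\, f\,dv$ pointwise on $\bar{\Omega}$; restriction of $H^\alpha$ from $\R\times\R^n$ to the compact set $\bar{\Omega}$ does not increase the norm, so the desired inequality follows.

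The only genuine subtlety lies in the commutator term $f\phi\kinet\chi$: naively $\kinet\chi$ contains a factor of $v$ that is unbounded, which would obstruct the $L^2(\Rall)$ bound. The $v$-cutoff $\phi$ is introduced precisely to kill this factor on its support, and this is why one cannot simply localize by a $(t,x)$-cutoff alone. Everything else is a routine unpacking of Bezard's hypothesis, so I would expect the write-up to be short once the choice of cutoffs is made.
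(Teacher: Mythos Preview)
Your approach is essentially the same as the paper's---localize by a smooth space-time cutoff, compute the commutator with the transport operator, and invoke Bezard's global theorem---but you add a second cutoff $\phi(v)$ in velocity that the paper omits. The paper multiplies only by a $(t,x)$-cutoff $\phi$ and writes $\kinet(\phi f)=\phi g+f\kinet\phi$, then bounds $\norm{\bessel^{-m/2}(f\kinet\phi)}_{L^2}$ by $C\norm{f}_{L^2}$ by appealing to boundedness of $\bessel^{-m/2}$ on $L^2$. As you correctly flag, this step is not justified in the generality of the lemma's statement: $\kinet\phi$ contains a factor $v\cdot\nabla_x\phi$ with linear growth in $v$, so $f\kinet\phi$ need not lie in $L^2(\Rall)$ (or even in $L^2_{t,x}H^{-m}_v$) for a generic $f\in L^2$. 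Your velocity cutoff, chosen so that $\phi\equiv 1$ on $\supp\eta$, neutralizes this growth without changing the average $\int\eta f\,dv$ on $\bar{\Omega}$, and your commutator bound then goes through cleanly. In the paper's actual applications the functions fed into the lemma happen to have compact $v$-support (or are otherwise controlled), so the gap is harmless there; but as a proof of the lemma \emph{as stated}, your version is the more careful one.
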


By $\norm{g}_{H^\alpha(\bar{\Omega})}$, we mean the infimum of $\norm{\tilde{g}}_{H^\alpha(\R^{n+1})}$ over all extensions $\tilde{g}$ of $g$ to $\R^{n+1}$.  

\begin{proof}
Let $\phi(t,x)$ be a smooth function supported on $\Omega$ and identically equal to 1 on $\bar{\Omega}$.  Then
\[ \kinet (\phi f) = \phi g + f \kinet \phi. \]
By \cite{bezard} Theorem 6,
\[ \norm{\phi \int \eta f \,dv}_{H^\alpha(\R \times \R^n)} \leq C \paren{\norm{\phi f}_{L^2(\R \times \R^n \times \R^n)} + \norm{\bessel^{-m/2} \paren{\phi g + f \kinet \phi}}_{L^2(\R \times \R^n \times \R^n)}}. \]
Because $\bessel^{-m/2}$ is a bounded operator from $L^2$ to $L^2$, and because $\phi$ is a smooth function supported on $\Omega$ and depending only on $t$ and $x$, 
\[ \norm{\bessel^{\frac{-m}{2}} \paren{\phi g + f \kinet \phi}}_{L^2(\R^{1+n+n})} \leq C(\phi) \norm{\bessel^{\frac{-m}{2}}g}_{L^2(\Omega\times \R^n)} + C(m,\phi)\! \norm{f}_{L^2(\Omega\times\R^n)}\!. \]

The result follows.  
\end{proof}

The following is a technical lemma about the geometry of cones.  We use it at the very end of the proof of Proposition~\ref{thm:DG2}.  

\begin{figure}[h]
\includegraphics[width=.5 \textwidth]{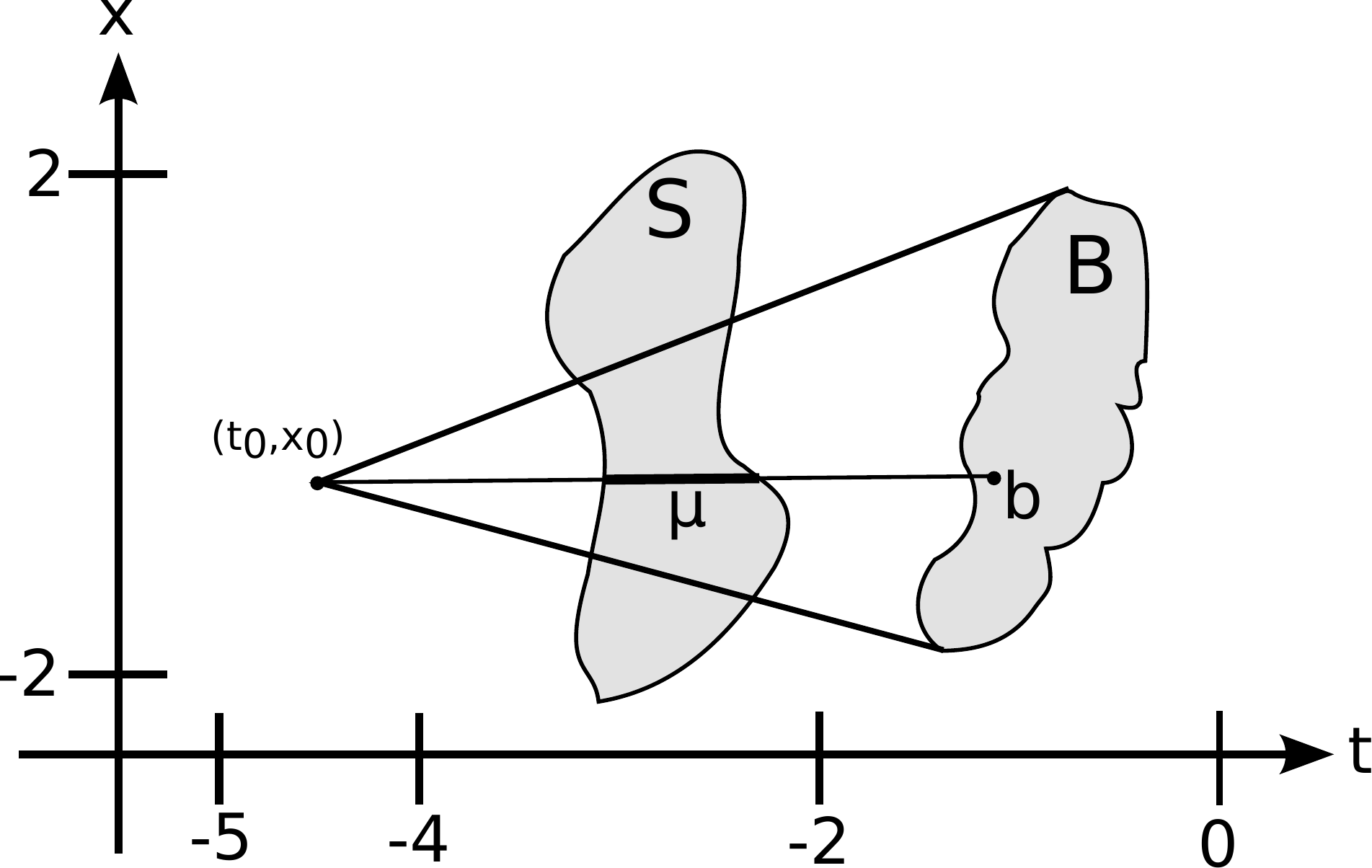}
\caption{A diagram showing the assumptions of Lemma~\ref{thm:cone}.}
\end{figure}

\begin{lemma}\label{thm:cone}
Let $\mathcal{C} \subseteq \R \times \R^n$ be a cone from a vertex $(t_0,x_0) \in [-5, -4] \times B_2$ to a base set $B \subseteq [-2,0]\times B_2$.  
Let $S$ be a subset of $\R\times\R^n$ such that for each $b \in B$, the line segment connecting $(t_0,x_0)$ to $b$ intersects $S$ on a set with Hausdorff $\mathcal{H}^1$ measure at least $\mu$.  

Then 
\[ \abs{\mathcal{C} \cap S} \geq \frac{ |B| \mu^2}{80}. \]
\end{lemma}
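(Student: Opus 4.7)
The plan is to exploit the natural parametrization of the cone by base-and-parameter pairs. Define the map $\Phi : B \times [0,1] \to \mathcal{C}$ by $\Phi(b,\tau) = p_0 + \tau(b - p_0)$, where $p_0 = (t_0, x_0)$. For each $b \in B$, the curve $\tau \mapsto \Phi(b, \tau)$ traces the segment $\ell_b$ with arclength element $|b - p_0|\, d\tau$. The geometric constraints force $|b - p_0| \leq L := \sqrt{41}$, so the hypothesis $\mathcal{H}^1(\ell_b \cap S) \geq \mu$ translates to $\int_0^1 \indic{S}(\Phi(b,\tau))\, d\tau \geq \mu/L$ for each $b$. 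Integrating over $B$ and swapping the order of integration gives
\[
\frac{|B|\mu}{L} \leq \int_0^1 \left(\int_B \indic{S}(\Phi(b,\tau))\, db\right) d\tau.
\]

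The main obstacle is the degeneracy of $\Phi$ at the vertex: for fixed $\tau$, the linear change of variables $p = p_0 + \tau(b-p_0)$ has Jacobian $\tau^{n+1}$, so a direct substitution introduces a factor $\tau^{-(n+1)}$ whose integral over $[0,1]$ diverges. I would overcome this by truncating at a small threshold $\tau_0 > 0$ chosen so that the inner region contributes negligibly. On $\tau \in [0,\tau_0]$, bound the inner integral trivially by $|B|$, so this region contributes at most $\tau_0|B|$; taking $\tau_0 := \mu/(2L)$ makes this at most half of $|B|\mu/L$.

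On $\tau \in [\tau_0, 1]$, apply the change of variables above. Since each scaled copy $p_0 + \tau(B - p_0)$ is contained in $\mathcal{C}$,
\[
\int_B \indic{S}(\Phi(b,\tau))\, db = \tau^{-(n+1)} \bigl|S \cap (p_0 + \tau(B - p_0))\bigr| \leq \tau^{-(n+1)} |\mathcal{C} \cap S|.
\]
Using $\int_{\tau_0}^1 \tau^{-(n+1)}\, d\tau \leq \tau_0^{-n}/n$ and combining with the previous step yields
\[
\frac{|B|\mu}{2L} \leq \frac{|\mathcal{C} \cap S|}{n\,\tau_0^n},
\]
from which $|\mathcal{C} \cap S| \geq c(n,L)\, |B|\, \mu^{n+1}$ follows after substituting $\tau_0 = \mu/(2L)$.

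Since the only application of this lemma merely requires positivity of $|\mathcal{C} \cap S|$, the precise constant $1/80$ asserted in the statement is immaterial; I expect the argument above reproduces the claim for $n=1$ (matching the figure), and for general $n$ one obtains an analogous estimate with $\mu^{n+1}$ in place of $\mu^2$ and a dimension-dependent constant. The substantive content of the proof is the splitting trick around the cone's tip, which turns the Jacobian singularity into a clean $\tau_0^{-n}$ factor that is then balanced against the size of $\mu$.
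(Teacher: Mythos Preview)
Your argument is correct, and in one respect it is more careful than the paper's own proof. The paper parametrizes the cone by time slices: it introduces the cross-sectional area $A(t)=\mathcal{H}^n(\mathcal{C}\cap(\{t\}\times\R^n))$ together with a probability measure on $B$ obtained by stereographic projection onto $\{0\}\times\R^n$, writes $|\mathcal{C}\cap S|=\int A(t)\int_B \indic{\cdot}\,db\,dt$, and then uses that $A$ is increasing to push all of the intersection mass toward the vertex, where $\int_{t_0}^{t_0+\mu/\sqrt{5}}A(t)\,dt$ gives the lower bound. Your approach instead keeps the natural cone parametrization $(b,\tau)\mapsto p_0+\tau(b-p_0)$ and handles the Jacobian degeneracy $\tau^{-(n+1)}$ directly by the threshold split at $\tau_0=\mu/(2L)$. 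Both routes ultimately balance the same two effects (small contribution near the vertex versus integrable Jacobian away from it), but yours makes the dimension dependence explicit.

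Your suspicion about the exponent is justified. The paper asserts that $A(t)$ is \emph{affine} in $t$ on $(t_0,-2)$; this is only true when $n=1$. For general $n$ one has $A(t)\propto (t-t_0)^n$ near the vertex, so the paper's computation $\int_{t_0}^{t_0+\mu/\sqrt{5}}A(t)\,dt\sim\mu^2$ should in fact yield $\sim\mu^{n+1}$, matching your bound. (A thin slab of width $\epsilon$ placed at the tip of a standard cone in $\R^{1+n}$ has volume $\sim\epsilon^{n+1}$, which shows the stated $\mu^2$ bound cannot hold for $n\geq 2$.) As you note, this is immaterial for the application in the second De~Giorgi lemma, where only strict positivity of $|\mathcal{C}\cap S|$ is used to derive a contradiction; your constant $n/(2L)^{n+1}$ serves just as well as $1/80$.
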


\begin{proof}
Let $A(t)$ be the cross-sectional area of our cone at time slice $t$.  If $\mathcal{H}^n$ is the Hausdorff measure of dimension $n$, we write
\[ A(t) = \mathcal{H}^n\paren{\mathcal{C} \cap \bracket{\{t\}\times \R^n}}. \]
By the nature of cones, $A(t_0)=0$, $A$ is affine for $t_0 < t < -2$, then sub-affine for $-2<t<0$, and $A(t)=0$ for $t > 0$.  Specifically,
\[ A(t) = \frac{A(-2)}{-2-t_0} \paren{t-t_0} \qquad t_0 < t < -2, \]
\[ A(t) \leq \frac{A(-2)}{-2-t_0} \paren{t-t_0} \qquad -2 \leq t. \]

Since $B$ is contained in $\mathcal{C} \cap [-2,0]\times \R^n$, 
\[ |B| \leq \int_{-2}^0 A(t) \,dt \leq \int_{-2}^0 \frac{A(-2)}{-2-t_0}\paren{t-t_0} \,dt = \frac{A(-2)}{-2-t_0} \bracket{t_0^2 - (2+t_0)^2}/2 \leq 4 A(-2). \]

This means that 
\[ A(-2) \geq \frac{|B|}{4}. \]

Now we have a lower bound on the size of the cone, so for $t_0 \leq t \leq -2$
\begin{equation}\label{lower_bound_on_A} 
A(t) \geq \frac{|B|}{4(-2-t_0)}(t - t_0). 
\end{equation}

%

Consider the map from $B$ to $\{0\}\times\R^n$ given by stereographic projection from the point $(t_0,x_0)$, and let $db$ be a probability measure on $B$ proportional to the pullback of $\mathcal{H}^n \rest_{\{0\}\times\R^n}$ under this projection.  
Then $db$ represents the proportion of any time-slice of $\mathcal{C}$ generated by rays through a given portion of $B$.  

To find the measure of $\mathcal{C} \cap S$, we must ask how much each time slice intersects $S$, or in integral form
\[ \abs{\mathcal{C} \cap S} = \int_{t_0}^{0} A(t) \int_{b \in B} \indic{(t,x) \in \mathcal{C} \cap S} \,dbdt. \]
By Fubini, this becomes
\begin{equation}\label{measure_of_intersection} 
\abs{\mathcal{C} \cap S} = \int_{b \in B} \int_{t_0}^0 A(t) \indic{(t,x) \in \mathcal{C} \cap S} \,dtdb. 
\end{equation}

From the definition of $\mu$ and the arc length formula,
\[ \mu \leq \int_{t_0}^0 \indic{(t,x) \in \mathcal{C} \cap S} \sqrt{1 + |b-x_0|^2/(-2-t_0)^2} \,dt \leq \sqrt{5} \int_{t_0}^0 \indic{(t,x) \in \mathcal{C} \cap S}. \]

Because $A(t)$ is increasing and $\indic{(t,x) \in \mathcal{C} \cap S}$ integrates to at least $\mu/\sqrt{5}$, 
\[ \int_{t_0}^0 A(t) \indic{(t,x) \in \mathcal{C} \cap S} \,dt \geq \int_{t_0}^{t+\mu/\sqrt{5}} A(t) \,dt. \]
From this bound, \eqref{measure_of_intersection}, and \eqref{lower_bound_on_A} we can at last compute
\[ \abs{\mathcal{C} \cap S} \geq \frac{|B|}{4(-2-t_0)} \int_{t_0}^{t_0 + \mu/\sqrt{5}} (t-t_0) \,dt = \frac{|B|}{4(-2-t_0)} \frac{\mu^2}{10} \geq \frac{|B|\mu^2}{80}. \]
\end{proof}

The following lemma is a commonly known fact about mollifiers.  Despite being known, a proof is surprisingly difficult to find in the existing literature.  Therefore, in the interest of completeness, we prove it here.   

\begin{lemma}\label{thm:convolution_estimate}
Let $\eta\in \Ctest(\R^n)$ be such that the sequence $\eta_\eps(v) = \eps^{-n} \eta(v/\eps)$ is an approximation to the identity.  There exists a constant $C=C(n,s,\eta)$ such that, for any $g \in H^s(\R^n)$, 
\[ \norm{g - g\ast \eta_\eps}_{L^2(\R^n)} \leq C \norm{g}_{H^s(\R^n)} \eps^s. \]
\end{lemma}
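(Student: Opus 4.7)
The plan is to pass to the Fourier side and exploit smoothness of $\hat\eta$ near the origin together with boundedness of $\hat\eta$ at infinity. Since $\int \eta = 1$ (this is what it means for $\eta_\eps$ to be an approximation to the identity), we have $\hat\eta(0) = 1$, and since $\eta \in \Ctest$, $\hat\eta$ is a Schwartz function. By Plancherel and the fact that $\widehat{g\ast\eta_\eps}(\xi) = \hat g(\xi)\,\hat\eta(\eps\xi)$, I would rewrite
\[
\norm{g - g\ast\eta_\eps}_{L^2(\R^n)}^2 = \int_{\R^n} |\hat g(\xi)|^2\,|1 - \hat\eta(\eps\xi)|^2\,d\xi.
\]

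The key pointwise bound I would then establish is
\[
|1 - \hat\eta(\eps\xi)|^2 \leq C(\eta)\,(\eps|\xi|)^{2s}
\]
for every $\xi \in \R^n$. This is a two-case argument. For $\eps|\xi|\le 1$, I use $|1 - \hat\eta(\eps\xi)| \le \norm{\grad\hat\eta}_\infty\,\eps|\xi|$ (which is finite since $\hat\eta$ is Schwartz), and then $(\eps|\xi|)^2 \le (\eps|\xi|)^{2s}$ in this regime because $s<1$. For $\eps|\xi|>1$, I use the crude bound $|1-\hat\eta(\eps\xi)| \le 1 + \norm{\hat\eta}_\infty$, which is dominated by $(\eps|\xi|)^s$ since $\eps|\xi|>1$ and $s>0$. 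The constant depends only on $s$ and on $\norm{\hat\eta}_\infty$ and $\norm{\grad\hat\eta}_\infty$, i.e. on $n$, $s$, and $\eta$.

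Plugging this into the Plancherel identity gives
\[
\norm{g - g\ast\eta_\eps}_{L^2(\R^n)}^2 \leq C\,\eps^{2s}\int_{\R^n}|\hat g(\xi)|^2\,|\xi|^{2s}\,d\xi = C\,\eps^{2s}\,\norm{\Lambda^s g}_{L^2(\R^n)}^2 \leq C\,\eps^{2s}\,\norm{g}_{H^s(\R^n)}^2,
\]
and taking square roots yields the claim. No step is genuinely an obstacle here; the only mild subtlety is verifying the two-regime bound on $|1-\hat\eta(\eps\xi)|$, which is a short interpolation between the smoothness of $\hat\eta$ at the origin and its boundedness at infinity.
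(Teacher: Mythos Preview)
Your proof is correct and takes essentially the same approach as the paper: pass to the Fourier side via Plancherel and bound $|1-\hat\eta(\eps\xi)|$ using Lipschitz continuity of $\hat\eta$ near the origin and its boundedness at infinity. The only cosmetic difference is that the paper phrases the bound as $\sup_\xi |1-\hat\eta_\eps(\xi)|^2/(1+|\xi|^2)^s \leq C\eps^{2s}$ and locates the maximum at the crossover point, whereas you split directly into the two regimes $\eps|\xi|\lessgtr 1$ and compare against $|\xi|^{2s}$.
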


\begin{proof}
The bound is easy to compute by taking the Fourier transform and using Plancharel's theorem:
\begin{align*} 
\norm{g - g\ast \eta_\eps}_{L^2}^2 &= \int \hat{g}^2 \paren{1 - \hat{\eta_\eps}}^2 \,d\xi 
\\ &\leq \int (1+\xi^2)^s \hat{g}^2 \,d\xi \,\, \sup_{\xi} \frac{\abs{1-\hat{\eta_\eps}(\xi)}^2}{(1+\xi^2)^s}
\\ &= \norm{g}_{H^s(\R^n)}^2 \,\, \sup_{\xi} \frac{\abs{1-\hat{\eta_\eps}(\xi)}^2}{(1+\xi^2)^s}.
\end{align*}

Since $\eta \in \Ctest$, the fourier transform $\hat{\eta}$ is Lipschitz with some constant $\bar{C}$. Thus $\hat{\eta_\eps}(\xi) = \hat{\eta}(\eps \xi)$ is Lipschitz with constant $\bar{C}\eps$.  Since $\eta_\eps$ is an approximation to the identity, $\hat{\eta_\eps}(0) = 1$ and $\abs{\hat{\eta_\eps}(\xi)} \leq 1$ for all $\xi$.  Thus
\[ \abs{1 - \hat{\eta_\eps}(\xi)} \leq \min(2,\bar{C}\eps|\xi|). \]
%

The function $\frac{\min(2,\bar{C}\eps\xi)^2}{(1+\xi^2)^s}$ achieves its maxumum value at the critical point $\bar{C}\eps|\xi| = 2$, and that maximum value is 
\[ \frac{2^2}{\paren{1+\paren{\frac{2}{\bar{C}\eps}}^2}^s} = \frac{4 \eps^{2s}}{\paren{\eps^2 + 4/\bar{C}^2}^s} \leq C \eps^{2s}. \]
\end{proof}

\bibliographystyle{plain}
\bibliography{NFP-references.bib}

\end{document}